\newcommand{\id}[1]{{\text{id}}_{#1}}
\newcommand{\Alt}[2]{{\rm Alt}^{#1}(#2)}
\newcommand{\Filt}[2]{{\rm Fil}^{#1}_{#2}}
\newcommand{\Grr}[3]{{\rm gr}^{#1}_{#2}K_0(#3)}
\newcommand{\grr}[3]{{\rm gr}^{#1}_{#2}{#3}}
\newcommand{\chern}[1]{{\rm ch}(#1)}
\newcommand{\CCong}[3]{{#1}={#2}\ ({\rm mod\ }{#3})}
\newcommand{\Z}{{\mathbf Z}}
\newcommand{\Q}{{\mathbf Q}}
\newcommand{\spec}[1]{{\text{Spec}(#1)}}
\newcommand{\kernel}{\text{ker}}
\newcommand{\cali}[1]{{\mathscr #1}}
\newcommand{\dime}[1]{{\rm dim}(#1)}
\newcommand{\class}[2]{{\rm cl}_{#1}({#2})}
\renewcommand{\id}[1]{{\rm id}_{#1}}
\newcommand{\proj}[3]{{\rm pr}^{#1}_{#2}(#3)}
\newtheoremstyle{astatement}
{13pt}
{13pt}
{\it}
{}
{\bf}
{.$-$}
{.5em}
{}
\theoremstyle{astatement}
\newtheoremstyle{adefinition}
{13pt}
{13pt}
{}
{}
{\scshape}
{.}
{.5em}
{}
\theoremstyle{adefinition}
\newtheoremstyle{statement}
{13pt}
{13pt}
{\it}
{}
{\bf}
{.$-$}
{.5em}
{}
\theoremstyle{statement}
\newtheorem{theorem}{Theorem}[section]
\newtheorem{lemma}[theorem]{Lemma}
\newtheorem{proposition}[theorem]{Proposition}
\newtheorem{cor}[theorem]{Corollary}
\newtheorem{conjecture}[theorem]{Conjecture}
\newtheoremstyle{definition}
{13pt}
{13pt}
{}
{}
{\scshape}
{.}
{.5em}
{}
\theoremstyle{definition}
\newtheorem{example}[theorem]{Example}
\newtheorem{remark}[theorem]{Remark}
\newtheoremstyle{definitionprime}
{13pt}
{13pt}
{}
{}
{\scshape}
{$'$.}
{.5em}
{}
\theoremstyle{definitionprime}
\newtheoremstyle{remarks}
{13pt}
{13pt}
{}
{}
{\scshape}
{.}
{.5em}
{}
\theoremstyle{remarks}
\newtheoremstyle{remarksb}
{13pt}
{13pt}
{}
{}
{\scshape}
{.}
{\newline}
{}
\theoremstyle{remarksb}
\newtheoremstyle{underlined}
{13pt}
{13pt}
{\sl}
{}
{\scshape}
{}
{.5em}
{}
\theoremstyle{underlined}
\def\section@cntformat{\S\thesection.\ }
\def\subsection@seccntformat{\S\thesubsection \ }
\begin{document}
\title{A note on the Grothendieck group of an abelian variety}


\author{Shahram Biglari}
\address{School of Mathematics, Institute for Research in Fundamental Sciences,  P.~O.~Box 19395-5746, Tehran, IRAN}
\curraddr{} \email{biglari@ipm.ir}
\thanks{}



\keywords{Grothendieck ring, abelian variety, $K_0-$cycle, Fourier-Mukai transform, Beauville decomposition.}

\maketitle
\begin{abstract}
 We reformulate a conjecture of Beauville on algebraic cycles on an abelian variety in terms of certain compatibility and vanishings of some naturally defined filtrations on the Grothendieck group of the abelian variety.
\end{abstract}

\section{Introduction}\label{introduction}
In~\cite{Beauville_1986}, Beauville defines for any abelian variety $A$ over an algebraically cosed field $k$ of characteristic zero, the subgroup $CH^p_s(A)$ of the Chow group $CH^p(A)\otimes\Q$ for any codimension $p\geq 0$ and $s\in \Z$ to consist of elements $x$ such that
\[
 n_A^\ast (x)=n^{2p-s}x
\]
for all $n\in \Z$ where $n_A$ is the endomorphism $a\mapsto na$ on $A$. He has made the following:
\begin{conjecture}[Beauville]
 If $s<0$, then $CH^p_s(A)=0$.
\end{conjecture}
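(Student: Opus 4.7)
The conjecture is a deep statement of Bloch--Beilinson type and remains open in general; it is known only in a handful of special cases (low or high codimension, and certain classes of examples). Given the paper's title and abstract, the realistic plan is not to attempt a direct proof but to reinterpret the conjecture on the Grothendieck group $K_0(A)_\Q$, where the Fourier--Mukai transform, the codimension (or $\gamma$-) filtration, and the $n_A^\ast$-eigenspace decomposition interact in a more symmetric fashion.

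First I would transport the Beauville decomposition from $CH^\ast(A)_\Q$ to $K_0(A)_\Q$ via the Chern character, which induces a ring isomorphism after rationalization and identifies the $p$-th graded piece of the codimension filtration with $CH^p(A)_\Q$. Since $n_A^\ast$ acts on $K_0(A)_\Q$ as a ring endomorphism commuting with the $\gamma$-filtration, one obtains a bigrading indexed by $(p,s)$ which refines both structures simultaneously and matches the Beauville pieces under the Chern character.

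Next I would bring in the Fourier--Mukai transform $\Phi\colon K_0(A)_\Q \to K_0(\hat A)_\Q$ and record how it acts on this bigrading: classically $\Phi$ exchanges the codimension grading and the $s$-weight grading up to shifts involving $n = \dim A$. The content of the reformulation is then twofold: a compatibility assertion stating that $\Phi$ respects a pair of naturally defined filtrations on $K_0$, and a vanishing assertion stating that certain negative-weight pieces are zero. Taken together, these two statements should be equivalent to Beauville's original conjecture.

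The main obstacle --- and the reason that only a reformulation rather than an unconditional proof is realistic --- is that no formal manipulation of the Chern character, the $\gamma$-filtration, and $\Phi$ can by itself force the vanishing of the negative $s$-pieces. Such a vanishing is, in Jannsen's framework, essentially equivalent to the existence of a Bloch--Beilinson filtration on $CH^\ast(A)_\Q$ with the expected compatibilities, which is itself a central open conjecture. What the passage to $K_0$ buys is a cleaner and more symmetric statement placing $\Phi$ and the codimension filtration on an equal footing; genuine progress would presumably require additional motivic input beyond what the $K_0$-theoretic formalism alone can supply.
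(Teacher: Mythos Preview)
You correctly identify that the statement is an open conjecture and that the paper offers a reformulation on $K_0(A)\otimes\Q$ rather than a proof; your broad outline---transport the Beauville decomposition via the Chern character, bring in the Fourier--Mukai transform, and recast the conjecture as a comparison of filtrations---matches the paper's overall shape. The paper is, however, more specific about the mechanism producing the second filtration than your sketch. Instead of a generic bigrading and a compatibility assertion for the Fourier--Mukai transform, it constructs a genuinely new $\lambda$-ring structure on $K_0(A)\otimes\Q$: first a Pontryagin $\lambda_\star$-structure on $(K_0(A),\star)$ obtained by conjugating the usual $\lambda$-operations on $K_0(\hat A)$ through the Fourier--Mukai isomorphism, and then a modified family of Adams operations $\psi^n_\pi$ on $K_0(A)$ with its ordinary product. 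The resulting $\gamma$-filtration $\Filt{q}{\pi}$ is intrinsic to the abelian structure of $A$, and the reformulated conjecture becomes the single containment $\Filt{q}{\pi}\subseteq\Filt{q}{\gamma}$ for all $q$; a further ``composed'' $\lambda$-structure $\Gamma$ yields the alternative vanishing form $\Filt{g+1}{\Gamma}=0$. What the paper's route buys over your sketch is that both sides of the comparison are $\gamma$-filtrations of honest $\lambda$-ring structures, so the standard Chern-class and Adams-eigenspace machinery applies uniformly to each; your description arrives at the same endpoint but does not isolate this $\lambda$-ring construction, which is the paper's principal technical device.
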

The original approach to study the conjecture has used the Fourier-Mukai transform. Here we follow a slightly different approach and give a new formulation of the conjecture.\\

The idea is to use the Grothendieck ring $K_0(A)$ of the variety $A$ as the basic setting for questions regarding cycles. By Riemann-Roch there is a homomorphism
\[
 K_0(A)\to CH^\bullet (A)\otimes\Q
\]
which is an isomorphism up to torsion. Using $K_0$ instead of $CH^\bullet$ and the clearer algebraic relations between its elements found through the $\lambda-$ring structure, makes $K_0$ a better setting for us to study questions related to the conjecture above.\\

The theory of Riemann-Roch starts with introducing the notion of a natural (Grothendieck) $\lambda-$ring structure on the ring $K_0(A)$ that is defined through the usual tensor product of complexes of bundles and the corresponding symmetric or alternating quotient bundles. Using the Pontryagin product instead, it is possible to define a $\lambda-$ring structure (with Adams operations denoted by $\psi^n_\star$) on the ring $K_0(A)$ equipped with the Pontryagin product $x\otimes y\mapsto x\star y$. Through a Chern theory this gives the Chow theory $CH_\bullet(A)$, i.e. the Chow group graded by integers determined by both $p,s$ (as before the statement of the conjecture above). We then use the machinery of Fourier-Mukai transform to show that certain careful change in the Adams operations $\psi^n_\star$ give a collection of maps $\psi_\pi^n$ satisfying the usual conditions of Adams operations. We thus  define a certain $\lambda-$ring structure on the ring $K_0(A)\otimes\Q$ with its usual product. As apposed to the usual $\lambda-$ring structure this new structure, naturally, sees the abelian structure of the variety $A$; for example if $L$ is a line bundle on $A$ and $n\geq 1$, then $\psi^n(L)=L^n$ whereas $\psi_\pi^n(L)=L$ for $L$ anti-symmetric and $\psi_\pi^n(L)={L}^n$ for $L$ symmetric. This new $\lambda-$ring structure and through an standard construction gives a filtration
\[
 K_0(A)\otimes\Q=\Filt{0}{\pi}\supseteq \Filt{1}{\pi}\supseteq\dots\supseteq \Filt{g+1}{\pi}=0.
\]
We call this the Pontryagin filtration. We can give the following:
\begin{conjecture}
 $\Filt{q}{\pi}\subseteq \Filt{q}{\gamma}$ for all $q\geq 0$.
\end{conjecture}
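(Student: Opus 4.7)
The plan is to transport the desired inclusion to Chow theory via the Chern character, where it should emerge as Beauville's conjecture itself. Both $\Filt{\bullet}{\gamma}$ and $\Filt{\bullet}{\pi}$ come from $\lambda$-ring structures on $K_0(A)\otimes\Q$; after rationalization each is the decreasing filtration whose $q$-th step is $\bigoplus_{p\geq q}K^{(p)}$, where $K^{(p)}$ denotes the common $n^p$-eigenspace of the corresponding Adams operations for all $n\in\Z$. The conjectured inclusion thus reduces to the eigenspace inclusion
\[
 \bigoplus_{p\geq q}K^{(p)}_\pi \;\subseteq\; \bigoplus_{p\geq q}K^{(p)}_\gamma \qquad (q\geq 0),
\]
which can be verified by comparing the two decompositions component by component.

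The identification $K^{(p)}_\gamma \cong CH^p(A)\otimes\Q$ is classical Riemann-Roch. For $K^{(p)}_\pi$, the key step is to establish the operator identity
\[
 n_A^\ast \;=\; \psi^n \circ \psi^n_\pi
\]
on $K_0(A)\otimes\Q$, which the Fourier-Mukai construction of $\psi^n_\pi$ in the preceding sections is arranged to yield. Since $n_A^\ast$ acts as $n^{2p-s}$ on a Beauville component $CH^p_s(A)\otimes\Q$ and $\psi^n$ acts as $n^p$, the factorization forces $\psi^n_\pi$ to act as $n^{p-s}$ on $CH^p_s$, hence
\[
 K^{(p)}_\pi \;\cong\; \bigoplus_{s} CH^{p+s}_{s}(A)\otimes\Q.
\]
I would first check the factorization on line bundles — where the prescriptions $\psi^n_\pi(L)=L$ for antisymmetric $L$ and $\psi^n_\pi(L)=L^n$ for symmetric $L$ recover $n_A^\ast c_1(L)=n\,c_1(L)$ and $n_A^\ast c_1(L)=n^2 c_1(L)$ respectively — and then extend to arbitrary classes by the splitting principle and multiplicativity of the three operators involved.

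Granting these identifications, the inclusion unravels to the implication: whenever $p-s\geq q$ and $CH^p_s(A)\neq 0$, one has $p\geq q$. Specializing to $q=p-s$ converts this into $CH^p_s=0$ whenever $s<0$, i.e.\ Beauville's conjecture, and conversely Beauville's vanishing immediately implies the inclusion for every $q\geq 0$. Thus the plan identifies the present conjecture as an exact reformulation of Beauville's.

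The obstacle is accordingly two-layered. The structural step — proving $n_A^\ast=\psi^n\circ\psi^n_\pi$ and the consequent description of $K^{(p)}_\pi$ as the sum of Beauville components with fixed $p-s$ — is a direct computation once the Fourier-Mukai machinery for $\psi^n_\pi$ is in place, and reducing it to line bundles via the splitting principle is routine. The genuine difficulty is that Beauville's conjecture itself remains open; what the present formulation buys is to repackage the question entirely inside the $\lambda$-ring $K_0(A)\otimes\Q$ as a comparison of two naturally defined filtrations, opening the way to a possible structural attack via the Newton identities relating $\psi^n$ and $\psi^n_\pi$ rather than direct manipulation of cycles.
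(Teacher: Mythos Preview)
Your overall strategy is correct and essentially the same as the paper's: both reduce the conjecture to Beauville's vanishing via the bigrading $K_0(A)\otimes\Q=\bigoplus_{p,q}K^p_q$ with $K^p_q=K^{(p)}\cap K_{(q)}$, and both hinge on the fact that $n_A^\ast$ acts on $K^p_q$ by $n^{g+p-q}$. Your operator identity $n_A^\ast=\psi^n\circ\psi^n_\pi$ is exactly the paper's Lemma~\ref{lem:beauville-1986-prop-1} rewritten, and once it is in hand your eigenspace bookkeeping and the equivalence with Beauville's conjecture go through as you describe (this is the content of Remark~\ref{rem:conjecture-Comparison-1}).

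There is, however, a genuine gap in how you propose to \emph{prove} the identity $n_A^\ast=\psi^n\circ\psi^n_\pi$. Checking it on line bundles is fine, but the extension ``by the splitting principle and multiplicativity'' does not work. The splitting principle operates by pulling back along a flag bundle $f\colon\mathrm{Fl}(E)\to A$, and the injectivity of $f^\ast$ lets one test identities built from the usual $\lambda$-operations. But $\psi^n_\pi$ is defined through the Fourier--Mukai transform and the group law on $A$; there is no reason for $f^\ast$ to intertwine $\psi^n_\pi$ with anything on $K_0(\mathrm{Fl}(E))$, which carries no abelian-variety structure. Nor can you bypass this by saying the three operators are ring homomorphisms and line bundles generate: $K_0(A)\otimes\Q$ is \emph{not} generated as a ring by classes of line bundles for a general abelian variety. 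The paper instead obtains the identity directly from the Fourier--Mukai definitions (Proposition~\ref{F_qmF_pn} and the construction of $\psi^n_\star$), reducing to Beauville's original Proposition~1; that is the argument you should invoke here.
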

This is, in our opinion, a natural reformulation of the conjecture. This paper is organized as follows.\\

After fixing some notations, we give an exposition on Fourier-Mukai transform on $K_0$. Immediately after this definition we consider the family of endomorphisms $m_A^\ast$ of $K_0(A)$ for $m\in \Z$. We prove this result: the sub-algebra of endomorphisms of $K_0(A)$ generated by all $m_A^\ast$ is as an abelian group free on $m_A^\ast$ for $m=0,1,\dots,2g=2\dime{A}$. We also give the universal relations between $m_A^\ast$ in an explicit formula. Next we consider the group $K_0(A)$ with its Pontryagin product and $\lambda_\star-$structure. This is shown to behave naturally and admit a Chern character theory. Finally we define the Pontryagin filtration on $K_0(A)$ (with its usual ring structure) and state the reformulation above.

In a last section, we show that it is also possible to define a certain ``composition'' of the two $\lambda-$ring structures above (a general notion of compositions of $\lambda-$ring structures is completely absent from the literature despite being such a natural notion in the category of $\lambda-$rings). This leads to a rather complicated $\lambda-$ring structure $\Gamma$. A second reformulation of Beauville conjecture is given in the form of a vanishing conjecture of higher filtrations of this composition structure. If this reformulated conjecture is true, it will furthermore show that each group $CH^p_s(A)$ is, according to a Riemann-Roch theorem, exactly an associated graded piece of certain filtration on $K_0(A)\otimes\Q$. 

The methods in this note will be used in future to establish certain formulas in the Grothendieck group of the Jacobian of a smooth projective curve using the language of $\lambda-$rings.\\

\emph{Acknowledgment}. I would like to thank Prof. A. Beauville for carefully reading an earlier version of this paper and finding a gap in it.\\

In what follows the base field $k$ is assumed to be algebraically closed and of characteristic zero.
\section{Notations and preliminaries}
The theory of Grothendieck group of schemes is presented in [SGA 6, Exp. IV, \S 2]: for a noetherian (separated) scheme $X$ let $K_0'(X)$ be the Grothendieck group of the derived category of (cohomologically) bounded complexes of ${\mathscr O}_X-$modules with coherent cohomology modules. This is an abelian group and can also be defined as the (\emph{na\"{i}ve}) Grothendieck group of the abelian category of coherent modules. For smooth schemes $S$ this is also the same as the Grothendieck group $K_0(S)$ of the exact category of locally free coherent $\cali{O}_S-$modules. These constructions are functorial. More precisely, $S\mapsto K_0(S)$ defines a contravariant functor from schemes to commutative rings and for smooth schemes $K_0'$ defines a covariant functor for proper morphisms.\\

We shall also use the projection formula in various settings. The formula is given in [SGA 6, Exp. IV, 2.11-12].\\

Recall the definition of coniveau (or topological) filtration on $K_0'(X)$: for each $j\geq 0$, the subgroup ${\rm Fil}_{\rm top}^j\subseteq K_0'(X)$ is defined to be generated by classes of coherenet modules having a support of codimention $\geq j$. Note that by definition
\[
\Filt{i}{\rm top}=0\quad{\rm for\ all\ }i>\dime{X}.
\]
The filtration is functorial in a suitable sense and in cases we deal with. This and other results are explained in [SGA 6, Exp. X]. We denote by $\Grr{\bullet}{\rm top}{S}$ (or $\Grr{\bullet}{}{S}$) the graded group associated to the filtration above.\\

There is a $\lambda-$ring structure on $K_0(S)$ defined by $\lambda^n(x)=\class{}{\Alt{n}{E}}$ where $x=\class{}{E}$ for a locally free coherent sheaf $E$ on $S$. The rank function $\epsilon(x)={\rm rk}(E)$ gives an augmentation. There is therefore defined the gamma (or Grothendieck) filtration $\Filt{n}{\gamma}$ on the ring $K_0(S)$; $\Filt{0}{\gamma}=K_0(S)$, $\Filt{1}{\gamma}={\rm ker}(\epsilon)$ and $\Filt{n}{\gamma}$ is the subgroup generated by all products $$\gamma^i(x)\cdot\gamma^i(x)\cdot\ldots\cdot \gamma^k(z)$$ with $i+j+\dotsb +k\geq n$ and $x,y,\dotsc,z\in \Filt{1}{\gamma}$. The associated graded group is denoted by $\grr{\bullet}{\gamma}{K_0(S)}$. The group $K_0'(S)$ becomes a filtered module over the filtered ring $K_0(S)$. There is a natural map
\[
 \rho\colon \grr{\bullet}{\gamma}{K_0(S)}\to \Grr{\bullet}{\rm top}{S}
\]
which is an isomorphism over the rational numbers.
\section{The Fourier-Mukai theory on $K_0$}
Let $A\to \spec{k}$ be an abelian variety of dimension $g$ over $\spec{k}$. Denote by $0_A$ the identity of $A$ as a group over $k$. Let $\hat{A}$ be the dual abelian variety of $A$ and $P$ the Poincar\'e line bundle on $A\times_k\hat{A}$; the unique (up to isomorphism) line bundle on $A\times_k\hat{A}$ whose inverse image along the morphisms $\hat{A}\to A\times\hat{A}$ (given by $\hat{x}\mapsto 0_A\times {\hat{x}}$) is trivial and for all (closed points) $\alpha\in \hat{A}$ the restriction $P_\alpha$ of $P$ to $A\times {\alpha}$ is represented by $\alpha$ under the natural isomorphism $\hat{A}\simeq {\rm Pic}^0(A)$ (see~\cite[II, 8.]{mumford-1985}). Let $p$ be the $K_0-$class of $P$. This defines the so-called Fourier-Mukai transform
\[
F_p\colon K_0(A)\to K_0(\hat{A}),\quad x\mapsto \proj{}{\hat{A}_\ast}{\proj{\ast}{A}{x}\cdot p}.
\]
Similarly the Poincar\'e line bundle $\hat{P}$ on ${B}\times \hat{B}$ where $B=\hat{A}$ defines a morphism $F_{q}\colon K_0(\hat{A})\to K_0({A})=K_0(\hat{B})$ where $q=\class{}{\hat{P}}$. More generally for any element in $u\in K_0(A\times \hat{A})$ the same formula above (with $p$ replaced by $u$) defines a homomorphism of abelian groups;
\[
 K_0(A\times\hat{A})\to {\mathrm{Hom}}_{\Z} (K_0(A), K_0(\hat{A})),\quad u\mapsto u^\ast=F_u
\]
that is compatible with compositions of $K_0-$correspondences. For each integer $m$ let the endomorphism $(m_A)^\ast$ (resp. $(m_A)_\ast$) of $K_0(A)$ be the $K_0$ (resp. $K_0'$) of the morphism $m_A\colon A\to A$ given by $x\mapsto mx$. Note that
\[
 0_A^\ast(x)={\mathrm {rk}}(x),\quad (0_A)_\ast(x)=\chi (x)[0_A]
\]
for all $x\in K_0(A)$. The result \cite[2.2]{mukai_1981} shows that $F_q\circ F_{p}$ is exactly the homomorphism $(-1)^g(-1_A)^\ast$ by showing that the $K_0-$correspondence $q\circ p$ in $K_0(A\times \hat{A})$ is represented by the complex
\[
(\text{the graph of $-1_A\colon A\to A$})_{\ast} (\cali{O}_A)[-g]
\]
in the derived category $D^b_{\mathrm{coh}}(A\times \hat{A})$. We may generalize this:
\begin{proposition}\label{F_qmF_pn} 
For any integers $n,m$, we have
\[
 F_{q^m}\circ F_{p^n}=(-1)^g(-m_A)^\ast\circ (n_A)_\ast
\]
\end{proposition}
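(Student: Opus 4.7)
The plan is to reduce to the case $n=m=1$ (Mukai's identity $F_q\circ F_p=(-1)^g(-1_A)^\ast$ recalled just before the statement) by factoring each of $F_{p^n}$ and $F_{q^m}$ as a composition of $F_p$ or $F_q$ with a push/pull along a multiplication map on $A$.

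The geometric input is the see--saw identity
\[
 (n_A\times\id{\hat A})^\ast P\cong P^{\otimes n},\qquad (\id{\hat A}\times m_A)^\ast\hat P\cong\hat P^{\otimes m},
\]
which holds because both sides restrict to $L_{n\alpha}$ (respectively $L_{m\hat a}$) on every fibre of the first projection, since $n_A^\ast$ acts as multiplication by $n$ on $\mathrm{Pic}^0(A)$, and are trivial on $\{0_A\}\times\hat A$. In $K_0$ this reads $p^n=(n_A\times\id{\hat A})^\ast p$ and $q^m=(\id{\hat A}\times m_A)^\ast q$. With this at hand I would establish the two factorisations
\[
 F_{p^n}=F_p\circ (n_A)_\ast\qquad\text{and}\qquad F_{q^m}=m_A^\ast\circ F_q.
\]
For the first, proper base change along the Cartesian square
\[
 \begin{array}{ccc}
  A\times\hat A & \xrightarrow{\pi_A} & A\\
  n_A\times\id{\hat A}\downarrow & & \downarrow n_A\\
  A\times\hat A & \xrightarrow{\pi_A} & A
 \end{array}
\]
yields $\pi_A^\ast(n_A)_\ast=(n_A\times\id{\hat A})_\ast\pi_A^\ast$; combining this with the projection formula, the see--saw identity, and the relation $\pi_{\hat A}\circ(n_A\times\id{\hat A})=\pi_{\hat A}$ collapses $F_p((n_A)_\ast x)$ to $F_{p^n}(x)$. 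The second identity is obtained symmetrically on $\hat A\times A$: flat base change for the square of $\sigma_A\colon\hat A\times A\to A$ along $m_A$ gives $m_A^\ast\sigma_{A\ast}=\sigma_{A\ast}(\id{\hat A}\times m_A)^\ast$, and the relation $\sigma_{\hat A}\circ(\id{\hat A}\times m_A)=\sigma_{\hat A}$ together with the projection formula closes the argument.

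Composing the two factorisations and invoking Mukai,
\[
 F_{q^m}\circ F_{p^n}=m_A^\ast\circ F_q\circ F_p\circ(n_A)_\ast=(-1)^g\,m_A^\ast\circ(-1_A)^\ast\circ(n_A)_\ast=(-1)^g(-m_A)^\ast\circ(n_A)_\ast,
\]
the last equality being $(-1_A)\circ m_A=-m_A$. I expect the main point of vigilance to be the pairing of the two factorisations: choosing $(n_A)_\ast$ on the right and $m_A^\ast$ on the left places $F_q\circ F_p$ in the middle, ready for Mukai. The symmetric alternatives $F_{p^n}=n_{\hat A}^\ast\circ F_p$ and $F_{q^m}=F_q\circ(m_{\hat A})_\ast$ are equally valid but would sandwich operators on $\hat A$ between $F_q$ and $F_p$ and force an additional base-change step to reach the same conclusion.
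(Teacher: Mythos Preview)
Your proposal is correct and follows essentially the same route as the paper: both use the see--saw identities $P^{\otimes n}\simeq(n_A\times\id{\hat A})^\ast P$ and $\hat P^{\otimes m}\simeq(\id{\hat A}\times m_A)^\ast\hat P$, then invoke base change and the projection formula to obtain the factorisations $F_{p^n}=F_p\circ(n_A)_\ast$ and $F_{q^m}=m_A^\ast\circ F_q$, and finish with Mukai's identity. One cosmetic remark: the phrase ``flat base change \ldots\ along $m_A$'' is slightly imprecise for $m=0$, but the base change formula still holds because the projection $\sigma_A$ is flat (tor-independence), so nothing in the argument is affected.
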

\begin{proof}
We note that with notations as above, there is an isomorphism $P^{\otimes n}\simeq (n_A\times \id{\hat{A}})^\ast P$ and similarly $(\id{\hat{A}}\times m_{{A}})^\ast \hat{P}\simeq \hat{P}^{\otimes m}$. Taking $K_0-$classes of these and using the usual arguments (i.e. the projection formula and the base change), it follows from the compatibility of the Fourier-Mukai transforms with the compositions of correspondences that 
\begin{equation*}
\begin{split}
 F_{q^m}\circ F_{p^n}(a) & = F_{(\id{\hat{A}}\times m_{{A}})^\ast q}\circ F_{(n_A\times \id{\hat{A}})^\ast p}(a)\\
 & = m_{{A}}^\ast \circ F_q \circ F_p \circ {n_{A}}_\ast (a)\\
 & = (-1)^gm_A^\ast \circ (-1_A)^\ast \circ (n_A)_\ast (a)\\
 & = (-1)^g (-m_A)^\ast \circ (n_A)_\ast (a).
\end{split}\end{equation*}
The result follows.
\end{proof}
We recall the statement of Grothendieck-Riemann-Roch in this setting. Using the same principle as above and denoting by $p$ the class of the Poincar\'e bundle in $\grr{\bullet}{\gamma}{K_0(A\times\hat{A})}$, as above there is a homomorphism
\[
 F^{\rm gr}_p:\colon \Grr{\bullet}{\gamma}{{A}}\otimes\Q\to \Grr{\bullet}{\gamma}{\hat{A}}\otimes\Q.
\]
Let us denote the map $K_0(A)\to \Grr{\bullet}{\gamma}{A}\otimes\Q$ appearing in the Riemann-Roch theorem of Grothendieck by ${\rm ch}={\rm ch}_A$ (the correct notation is ${\rm ch}\circ \tilde{c}$). For example if $L$ is a line bundle on $A$ and $x=\class{}{L}$, then the Chern class $c^j(x)=0$ for all $j>1$, $\CCong{c^1(x)}{x-1}{\Filt{2}{}}$ and
\[
 \chern{x}={\rm exp}( c^1(x)) =1+c^1(x)+\frac{1}{2!}c^1(x)^2+\cdots .
\]

\begin{theorem}\label{GRR}
The diagram 
\[
 \xymatrix{
 K_0(A)\ar[d]_-{{\rm ch}_A} \ar[r]^-{F_p} & K_0(\hat{A})\ar[d]^-{{\rm ch}_{\hat{A}}}\\
 \Grr{\bullet}{\gamma}{{A}}\otimes\Q\ar[r]^-{F^{\rm gr}_p} & \Grr{\bullet}{\gamma}{\hat{A}}\otimes\Q
 }
\]
is commutative. Moreover ${F^{\rm gr}_p}$ is an isomorphism with inverse $(-1)^g{F^{\rm gr}_{q^{-1}}}$.
\end{theorem}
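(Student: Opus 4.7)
The plan is to deduce both claims from Grothendieck--Riemann--Roch applied to the smooth proper projection $\text{pr}_{\hat A}\colon A\times\hat A\to \hat A$, together with Proposition \ref{F_qmF_pn}. First I would factor $F_p$ as push--multiply--pull and treat each piece separately. Since the Chern character is a ring homomorphism, it commutes with the pullback $\text{pr}_A^\ast$ and with multiplication by $p$. The substantive step is GRR, which gives
\[
\chern{\text{pr}_{\hat A\,\ast}(y)}=\text{pr}_{\hat A\,\ast}\bigl(\chern{y}\cdot{\rm td}(T_{\text{pr}_{\hat A}})\bigr).
\]
The relative tangent bundle of $\text{pr}_{\hat A}$ is the pullback of $T_A$, and since $A$ is an abelian variety $T_A$ is trivial, so its Todd class is $1$. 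Thus GRR collapses to the naive compatibility $\chern{\text{pr}_{\hat A\,\ast}(y)}=\text{pr}_{\hat A\,\ast}(\chern{y})$, and composing the three compatibilities gives the commutativity of the square.

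For the inverse statement, I would use Proposition \ref{F_qmF_pn} extended to all integers $n,m\in\Z$. The two isomorphisms invoked in its proof, $P^{\otimes n}\simeq(n_A\times \id{\hat A})^\ast P$ and $(\id{\hat A}\times m_A)^\ast \hat P\simeq \hat P^{\otimes m}$, are valid for every $n,m\in\Z$ (in particular, for $n=-1$ one has $((-1_A)\times\id{\hat A})^\ast P\simeq P^{-1}$), and the projection formula and base change used in the proof are insensitive to signs. Hence
\[
F_{q^m}\circ F_{p^n}=(-1)^g(-m_A)^\ast\circ (n_A)_\ast
\]
holds for arbitrary $n,m\in\Z$. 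Specializing to $n=1$, $m=-1$ --- where $q^{-1}=\class{}{\hat P^{-1}}$ already makes sense in $K_0(\hat A\times A)$ because line bundles are units there --- yields $F_{q^{-1}}\circ F_p=(-1)^g\id{K_0(A)}$. Applying the Chern character and the commutativity of the square just established then gives $F_{q^{-1}}^{\rm gr}\circ F_p^{\rm gr}=(-1)^g{\rm id}$; the symmetric calculation with $A$ and $\hat A$ interchanged (using Proposition \ref{F_qmF_pn} with the roles of $p$ and $q$ reversed) yields the other composition, so $F_p^{\rm gr}$ is an isomorphism with inverse $(-1)^g F_{q^{-1}}^{\rm gr}$.

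The main obstacle I foresee is purely organizational: keeping track of the sign $(-1)^g$ through the composition, and verifying that each ingredient in the proof of Proposition \ref{F_qmF_pn} --- the two Poincar\'e bundle identities, the projection formula, the base change for the Cartesian square defining $n_A\times \id{\hat A}$ --- really remains valid for negative exponents. All of these reduce to standard properties of the Poincar\'e bundle combined with the GRR step above, so I expect the proof to amount to assembling known pieces rather than proving anything new.
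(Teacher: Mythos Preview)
Your proposal is correct and matches the paper's approach exactly: the paper's proof is the one-line remark that this follows from Grothendieck--Riemann--Roch for abelian varieties with trivial relative Todd class, and you have spelled out precisely that argument. One small point: Proposition~\ref{F_qmF_pn} is already stated for all integers $n,m$, so no extension is needed---you may simply specialize to $n=1$, $m=-1$ directly.
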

\begin{proof}
 This follows from Grothendieck-Riemann-Roch for abelian varieties (where the relative Todd class of a homomorphism of abelian varieties is trivial).
\end{proof}
The above can be used to prove the following:
\begin{proposition}\label{m_* independence}
The subalgebra of endomorphisms of $K_0(A)$ generated by $(m_A)_\ast$ for $m\in \Z$ is as an abelian group freely generated by $(m_A)_\ast$ for $m=0,1,\dots,2g$.
\end{proposition}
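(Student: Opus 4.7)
The plan is to pass to $K_0(A)\otimes\Q$ via Grothendieck-Riemann-Roch, use the Beauville decomposition to diagonalise $(m_A)_\ast$ and thereby prove linear independence, and then lift the resulting rational Lagrange relations to an integer identity in $\mathrm{End}(K_0(A))$ via the Fourier-Mukai formula of Proposition~\ref{F_qmF_pn} together with an integral nilpotency statement for the Poincar\'e class in $K_0(A\times\hat{A})$.

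For linear independence, I would start from the fact that $m_A\colon A\to A$ has everywhere trivial relative tangent bundle, so Grothendieck-Riemann-Roch implies that $(m_A)_\ast$ commutes with the Chern character on $K_0(A)\otimes\Q$. Combined with Beauville's decomposition $CH^\bullet(A)\otimes\Q=\bigoplus_{p,s}CH^p_s(A)$ and the identity $(m_A)_\ast\circ (m_A)^\ast=m^{2g}\cdot\mathrm{id}$ on $K_0(A)\otimes\Q$ (from the degree of the isogeny $m_A$ for $m\neq 0$, and extending trivially to $m=0$ using $\chi(\mathcal{O}_A)=0$), this gives that $(m_A)_\ast$ acts on $CH^p_s(A)$ as multiplication by $m^{2g-2p+s}$. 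Granting that each weight $w\in\{0,1,\dots,2g\}$ is realised on a nonzero eigenspace, a relation $\sum_{k=0}^{2g}a_k(k_A)_\ast=0$ would imply $\sum_{k=0}^{2g}a_k\cdot k^w=0$ for every such $w$, and the nonvanishing of the associated Vandermonde determinant then forces $a_k=0$ for all $k$.

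For generation, I would specialise Proposition~\ref{F_qmF_pn} to $m=-1$ to obtain the exact identity $(n_A)_\ast=(-1)^g\,F_{q^{-1}}\circ F_{p^n}$ in $\mathrm{End}(K_0(A))$ for every $n\in\Z$. Since $u\mapsto F_u$ is $\Z$-linear, it suffices to expand $p^n$ as an integer combination of $p^0,p^1,\dots,p^{2g}$ in $K_0(A\times\hat{A})$. Writing $p=1+c'$ with $c'=p-1\in\Filt{1}{\gamma}K_0(A\times\hat{A})$, the key integral input is
\[
(c')^{2g+1}\in\Filt{2g+1}{\gamma}K_0(A\times\hat{A})\subseteq \Filt{2g+1}{\rm top}K_0(A\times\hat{A})=0,
\]
which holds since $\dim(A\times\hat{A})=2g$. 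The binomial expansion then gives $p^n=\sum_{j=0}^{2g}\binom{n}{j}(c')^j$ in $K_0(A\times\hat{A})$, so $n\mapsto p^n$ is an integer-valued polynomial in $n$ of degree $\leq 2g$. Lagrange interpolation at $\{0,1,\dots,2g\}$ then yields
\[
p^n=\sum_{k=0}^{2g}L_k(n)\,p^k,\qquad L_k(X)=\prod_{\substack{0\le j\le 2g\\ j\neq k}}\frac{X-j}{k-j},
\]
with each $L_k(n)\in\Z$ for $n\in\Z$, since $L_k\in\Q[X]$ has degree $2g$ and takes integer values at the $2g+1$ consecutive integers $0,1,\dots,2g$. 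Applying $(-1)^g\,F_{q^{-1}}\circ F_{(-)}$ to both sides produces the desired integer identity $(n_A)_\ast=\sum_{k=0}^{2g}L_k(n)\,(k_A)_\ast$ in $\mathrm{End}(K_0(A))$.

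The two technical points requiring care are (i) verifying that each weight $w\in\{0,1,\dots,2g\}$ is realised by a nonzero element of the Beauville decomposition of $K_0(A)\otimes\Q$, needed to make the Vandermonde step nondegenerate (here one can build such elements from Chern classes of symmetric and anti-symmetric line bundles and their products), and (ii) establishing the integral nilpotency $(p-1)^{2g+1}=0$ in $K_0(A\times\hat{A})$ itself, and not merely after tensoring with $\Q$. The latter is what makes the Lagrange interpolation identity valid already in $K_0(A\times\hat{A})$ and hence transportable, through the Fourier-Mukai machinery, to an honest identity of operators in $\mathrm{End}(K_0(A))$.
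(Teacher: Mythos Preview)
Your proposal is correct and follows essentially the same approach as the paper: linear independence via passage to Chow groups, Beauville's eigenspace decomposition, and a Vandermonde argument; generation via Proposition~\ref{F_qmF_pn} together with the integral nilpotency $(p-1)^{2g+1}=0$ in $K_0(A\times\hat{A})$. The only cosmetic difference is that for generation the paper extracts an explicit relation for $\id{}$ from $p=\sum_{k=0}^{2g}(1-p^{-1})^k$ and then inducts on $|n|$, whereas you package the same nilpotency as a Lagrange interpolation identity; both arguments rest on the identical integral input. For your point (i) the paper simply invokes \cite[Prop.~4]{Beauville_1986}; for (ii) the integral nilpotency holds because $\Filt{2g+1}{\gamma}\subseteq\Filt{2g+1}{\rm top}=0$ on a smooth $2g$-dimensional variety, as recalled in the preliminaries.
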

\begin{proof}
First note that $1-p\in \Filt{1}{\gamma}$. Using~\ref{F_qmF_pn} and the equation $p=\sum_{k=0}^{2g}(1-p^{-1})^{k}$
in $K_0(A\times \hat{A})$ we find that
\[
 \begin{split}
  \id{}& =(-1)^gF_{q^{-1}}\circ F_p\\
  & = (-1)^g \sum_{k=0}^{2g} F_{q^{-1}}\circ F_{(1-p^{-1})^k}\\
  & = (-1)^g \sum_{k=0}^{2g} \sum_{m=0}^{k} (-1)^m\binom{k}{m}F_{q^{-1}}\circ F_{p^{-m}}\\
  & = \sum_{m=0}^{2g} (-1)^m\binom{2g+1}{m+1} (-m_A)_\ast.
 \end{split}
\]
Next we show that if $k$ is an integer with $k>2g$ or $k<0$, then the endomorphism $(k_A)_\ast$ of $K_0(A)$ is a $\Z-$linear combination of $(m_A)_\ast$ for $m=0,1,\dots,2g$. Let $k>2g$. Since $(1-p)^k=0$, it follows from the same argument as above that a sum of $(j_A)_\ast$ for $j=0,1,\dots,k$ vanishes. The assertion for $k>2g$ follows from this by induction. To prove the assertion for $k<0$, we need only to consider the case $k=-1$. This case follows by applying $(-1)_\ast$ to the first equation in this proof. Now we show that $(m_A)_\ast$ for $m=0,1,\dots,2g$ are $\Z-$linearly independent. Assume otherwise: i.e. there is a relation
\[
\Lambda:= \sum_{m=0}^{2g}\lambda_m (m_A)_\ast=0
\]
as endomorphisms of $K_0(A)$ with $\lambda_m\in \Z$. Since each $(m_A)_\ast$ preserves the gamma filtration (up to torsion), by~\ref{GRR} the same relation as $\Lambda$ holds for $(m_A)_\ast$'s as endomorphisms of $\Grr{\bullet}{\gamma}{A}\otimes\Q\simeq CH^\bullet (A)\otimes\Q$. Therefore we need only to show that
\[
\Lambda\colon \Grr{\bullet}{\gamma}{A}\otimes\Q\to \Grr{\bullet}{\gamma}{A}\otimes\Q
\]
is zero if and only if $\lambda_m=0$ for all $m\leq 2g$. By~\cite[Prop. 4]{Beauville_1986}, we know that for all $0\leq j\leq g$ and $g-j\leq i\leq g$ there is an element $0\neq x\in \Grr{j}{\gamma}{A}\otimes\Q$ with $(m_A)_\ast(x)=m^{g-j+i}x$ for all $m\in \Z$. The equation $\Lambda (x)=0$ implies that $\sum \lambda_m m^{g-j+i}=0$. Using this for all values of $i,j$ as above, we conclude that 
\[
 \sum \lambda_m m^{k}=0,\text{ for all $0\leq k\leq 2g$}.
\]
Note that the $(2g+1)\times (2g+1)-$matrix $X$ with the entry $m^{k}$ in position $(m,k)$ where $0\le m,k\leq 2g$ is invertible. It follows that $\lambda_m=0$ for all $m$. This completes the proof.
\end{proof}
\begin{remark}
 The universal relation between the endomorphism $(m_A)_\ast$ is given by the formula in the proof of~\ref{m_* independence} expressing $(-1_A)_\ast$ in terms of $(m_A)_\ast$ for $m=0,1,\dots,2g$. Also note that for $m\neq 0$, the homomorphism $(m_A)_\ast$ is an automorphism on $K_0(A)\otimes\Q$. There are many other classes of automorphisms of $K_0(A)\otimes\Q$ (see below~\ref{prop:omega_n}). 
\end{remark}

\begin{remark}
 Almost the same proof as in~\ref{m_* independence} shows that a similar statement for the sub-algebra generated by $m_A^\ast$ for $m\in \Z$ holds.
\end{remark}

\section{Pontryagin $\lambda_\star-$structure}
We shall introduce a filtration on $K:=K_0(A)$ induced from a natural $\lambda-$ring structure on $K_0(A)$ arising from the Pontryagin product.\\

Let $\pi\colon A\to \spec{k}$ be an abelian variety of dimension $g$ over a field $k$. The Pontryagin product on $K_0(A)$ is defined as follows;
\[
\star\colon K_0'(A)\otimes K_0'(A)\to K_0'(A),\quad x\otimes y\mapsto m_\ast (\proj{\ast}{1}{x}\cdot \proj{\ast}{2}{y})
\]
where $m\colon A\times_kA\to A$ is the multiplication morphism (which is projective) and the projections are from $A\times_kA$. The unit of $A$ as a group scheme is denoted by $e_A\colon \spec{k}\to A$. This is a closed immersion.
\begin{lemma}
The Pontryagin product endows $K_0(A)$ with the structure of a commutative associative ring with unit $e:=e_{A,\ast}(1_S)=:[0_A]$.
\end{lemma}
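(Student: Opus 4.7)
The plan is to establish the three ring axioms directly from the definition $x\star y = m_\ast(\proj{\ast}{1}{x}\cdot \proj{\ast}{2}{y})$ using three standard tools: base change along Cartesian squares, the projection formula (recalled from [SGA 6, Exp. IV, 2.11-12]), and the fact that $A$ is smooth (so $K_0(A)=K_0'(A)$ and the intersection product in the definition makes sense).

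First I would prove commutativity using the swap automorphism $\sigma\colon A\times_k A\to A\times_k A$, $(a,b)\mapsto (b,a)$. Since $A$ is an abelian group scheme, $m\circ \sigma=m$ and $\proj{}{i}{}\circ \sigma=\proj{}{3-i}{}$. Applying $\sigma_\ast$ (or equivalently $(\sigma^{-1})^\ast$, since $\sigma$ is an isomorphism) commutes with $\cdot$ and we get
\[
x\star y=m_\ast\sigma_\ast(\sigma^\ast\proj{\ast}{1}{x}\cdot \sigma^\ast\proj{\ast}{2}{y})=m_\ast(\proj{\ast}{2}{x}\cdot \proj{\ast}{1}{y})=y\star x.
\]

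For associativity I would work on $A\times_k A\times_k A$ with projections $p_{ij}\colon A^3\to A^2$ and $p_i\colon A^3\to A$, and the two factorizations of the triple multiplication $m_3\colon A^3\to A$: $m\circ (m\times \id{A})=m_3=m\circ (\id{A}\times m)$. Using base change along the Cartesian squares relating $p_{12},p_{23}$ with $m\times \id{A},\id{A}\times m$, and the projection formula, expand both $(x\star y)\star z$ and $x\star (y\star z)$ into the common expression $(m_3)_\ast(p_1^\ast x\cdot p_2^\ast y\cdot p_3^\ast z)$.

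For the unit axiom, consider the Cartesian square
\[
\xymatrix{
\spec{k}\times_k A \ar[r]^-{i} \ar[d] & A\times_k A \ar[d]^-{\proj{}{1}{}}\\
\spec{k}\ar[r]^-{e_A} & A
}
\]
where $i\colon \{0_A\}\times A\hookrightarrow A\times A$. Base change gives $\proj{\ast}{1}{e_{A,\ast}(1)}=i_\ast(1)$, and by the projection formula
\[
\proj{\ast}{1}{e}\cdot \proj{\ast}{2}{x}=i_\ast(1)\cdot \proj{\ast}{2}{x}=i_\ast(i^\ast\proj{\ast}{2}{x})=i_\ast(x)
\]
after identifying $\{0_A\}\times A\simeq A$. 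Since $m\circ i=\id{A}$, pushing forward yields $e\star x=(m\circ i)_\ast(x)=x$, and commutativity then gives $x\star e=x$.

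The main obstacle is not conceptual but bookkeeping: each step relies on the compatibility of $m_\ast$ and $\proj{\ast}{}{}$ with base change and the projection formula in the derived $K_0'$-setting. These apply here because $m$ and the projections are flat (indeed smooth), the relevant squares are Tor-independent, and all relevant morphisms are projective, so that [SGA 6, Exp. IV, 2.11-12] can be invoked without modification.
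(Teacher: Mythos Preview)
Your proof is correct and follows the same line as the paper's: the paper in fact only writes out the unit computation (leaving commutativity and associativity to the reader), and your unit argument via base change along the Cartesian square for $e_A$ followed by the projection formula and $m\circ i=\id{A}$ is exactly the paper's argument, just with the roles of the two factors swapped. Your additional sketches for commutativity (via the swap $\sigma$) and associativity (via $m_3$) are standard and correct, and simply fill in what the paper omits.
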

\begin{proof}
We only show that $[0_A]$ is the unit element; for this let $x\in K_0(A)$ and note that
\begin{alignat*}{2}
x\star 0_A & =m_\ast (\proj{\ast}{1}{x}\cdot \proj{\ast}{2}{[0_A]})\\
& =m_\ast(\proj{\ast}{1}{x}\cdot (\id{}\times e)_\ast (1))&\quad\text{by base change}\\ 
& =m_\ast\circ (\id{}\times e)_\ast\bigl((\id{}\times e)^\ast\circ \proj{\ast}{1}{x}\cdot 1\bigr)&\quad\text{by projection formula}\\
& =x. 
\end{alignat*}
Similarly it follows that $0_A$ is a right unit.
\end{proof}
\begin{remark}\label{rem:f_* and pontryagin product}
Let $f\colon A\to B$ be a homomorphism of abelian varieties over $\spec{k}$. It is not difficult to show that
\[
f_\ast (x\star y)=f_\ast (x)\star f_\ast (y).
\]
Moreover $f_\ast (0_A)=0_B$. In other words, $f_\ast$ is a ring homomorphism with respect to the Pontryagin product.
\end{remark}
Using the rank function $\epsilon:={\rm rk}_A\colon K_0(A)\to \Z$, the ring $K_0(A)$ (with the usual product) becomes an augmented $\Z-$algebra. Similarly, the ring $K_\star=(K_0(A),\star)$ is also an augmented $\Z-$algebra: let $\chi=\chi_A\colon K_0(A)\to \Z$ be the Euler-Poincar\'e characteristic, i.e. $\chi_A(x)=\pi_\ast (x)$ and note that:
\begin{lemma}
For any $x,y\in K_0(A)$ we have
\[
 \chi_A(x\star y)=\chi_A(x)\cdot \chi_A(y).
\]
\end{lemma}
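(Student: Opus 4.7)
The plan is to reduce this to the identity $\pi\circ m=\pi\circ \mathrm{pr}_2$ (as morphisms $A\times_kA\to \spec{k}$) together with the projection formula and flat base change. Recall that by definition $\chi_A(x)=\pi_\ast (x)$.

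Starting from the definition of the Pontryagin product,
\[
\chi_A(x\star y)=\pi_\ast m_\ast \bigl(\proj{\ast}{1}{x}\cdot \proj{\ast}{2}{y}\bigr),
\]
I would replace $\pi_\ast m_\ast$ by $\pi_\ast \circ (\mathrm{pr}_2)_\ast$, since both express the pushforward along the structural morphism $A\times_kA\to \spec{k}$. The projection formula for $\mathrm{pr}_2$ then yields
\[
(\mathrm{pr}_2)_\ast\bigl(\proj{\ast}{1}{x}\cdot \proj{\ast}{2}{y}\bigr)=(\mathrm{pr}_2)_\ast \proj{\ast}{1}{x}\cdot y.
\]
To identify $(\mathrm{pr}_2)_\ast \proj{\ast}{1}{x}$, I would apply flat base change to the Cartesian square
\[
\xymatrix{
A\times_kA\ar[r]^-{\mathrm{pr}_1}\ar[d]_-{\mathrm{pr}_2} & A\ar[d]^-{\pi}\\
A\ar[r]^-{\pi} & \spec{k}
}
\]
obtaining $(\mathrm{pr}_2)_\ast \circ \mathrm{pr}_1^\ast = \pi^\ast \circ \pi_\ast$. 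Since $\pi_\ast(x)=\chi_A(x)\in \Z$, this gives $(\mathrm{pr}_2)_\ast \proj{\ast}{1}{x}=\chi_A(x)\cdot 1_A$, and combining everything yields $\chi_A(x\star y)=\pi_\ast(\chi_A(x)\cdot y)=\chi_A(x)\chi_A(y)$.

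I do not expect any substantive obstacle: the argument is purely formal, and the required projection formula and flat base change are available in the setting of SGA 6 (here $\pi$ is proper and smooth, and the projections are flat). Alternatively, the statement is the special case of Remark~\ref{rem:f_* and pontryagin product} applied to the homomorphism $\pi\colon A\to \spec{k}$, viewing $\spec{k}$ as the trivial (zero-dimensional) abelian variety on which the Pontryagin product on $K_0(\spec{k})=\Z$ coincides with the ordinary multiplication of integers.
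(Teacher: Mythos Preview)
Your proof is correct. The paper's own proof is exactly the alternative you mention at the end: it simply invokes Remark~\ref{rem:f_* and pontryagin product} for the homomorphism $\pi\colon A\to \spec{k}$ together with the observation that the Pontryagin product on $K_0(\spec{k})=\Z$ is the ordinary product. Your direct computation via the projection formula and flat base change is precisely what one would do to verify Remark~\ref{rem:f_* and pontryagin product} in this special case, so the two approaches differ only in packaging, not in substance.
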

\begin{proof}
This follows from~\ref{rem:f_* and pontryagin product} and the fact that on $\Z=K_0(\spec{k})$ the Pontryagin product agrees with the usual product.
\end{proof}

Let $n\geq 0$ and $x\in K_0(A)$. Define
\[
 \lambda_\star^n(x)=F_p^{-1}\circ \lambda^n_{\hat{A}}\circ F_p(x).
\]
These define a $\lambda-$ring structure on $(K_0(A),\star)$ which we call the Pontryagin $\lambda_\star-$structure.
\begin{remark}
 It is possible to define this structure using the Pontryagin product on the level of derived category of coherent modules on $A$. For our purpose here we have chosen the above equivalent definition.
\end{remark}

\begin{theorem}\label{thm:fm-iso}
The Fourier-Mukai transform $F_p\colon (K_0(A),\star)\to K_0(\hat{A})$ is an isomorphism of $\Z-$augmented $\lambda-$rings.
\end{theorem}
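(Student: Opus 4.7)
The plan is to establish separately that $F_p$ is bijective, multiplicative when the source carries the Pontryagin product, and augmentation-preserving; compatibility with the $\lambda$-operations will then be automatic. Bijectivity is already in hand: Mukai's computation recalled before Proposition~\ref{F_qmF_pn} gives $F_q\circ F_p=(-1)^g(-1_A)^\ast$, and the symmetric statement with the roles of $A$ and $\hat A$ exchanged gives $F_p\circ F_q=(-1)^g(-1_{\hat A})^\ast$, so both composites are automorphisms and $F_p$ is a group isomorphism.

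The crux is the multiplicativity $F_p(x\star y)=F_p(x)\cdot F_p(y)$, which I would prove by the classical Fourier--Mukai argument exchanging convolution and tensor product. Writing $m\colon A\times A\to A$ for the multiplication and $q_1,q_2$ for the projections $A\times A\to A$, one has
\[
F_p(x\star y)=(\pi_{\hat A})_\ast\bigl(\pi_A^\ast m_\ast(q_1^\ast x\cdot q_2^\ast y)\cdot p\bigr).
\]
Applying flat base change to the Cartesian square with $m$ on the bottom and $m\times\id{\hat A}$ on top, followed by the projection formula, rewrites this as a pushforward from $A\times A\times\hat A$ to $\hat A$ involving $(m\times\id{\hat A})^\ast p$. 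The key input is then the isomorphism
\[
(m\times\id{\hat A})^\ast P\simeq r_{13}^\ast P\otimes r_{23}^\ast P
\]
in ${\rm Pic}(A\times A\times\hat A)$, where $r_{13},r_{23}$ denote the two projections to $A\times\hat A$; fibrewise over $\hat A$ this reduces to the multiplicativity of elements of ${\rm Pic}^0(A)$ under the group law, which is the Theorem of the Square. A second flat base change through the Cartesian square formed by the two copies of $\pi_{\hat A}\colon A\times\hat A\to\hat A$, together with one more application of the projection formula, then reassembles the expression as $F_p(x)\cdot F_p(y)$.

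For the augmentation compatibility $\chi_A(x)=\epsilon_{\hat A}(F_p(x))$, I would use that for any coherent sheaf $F$ on $A$ the sheaf $\pi_A^\ast F\otimes P$ is $\pi_{\hat A}$-flat (since $P$ is a line bundle), so that $\alpha\mapsto\chi(F\otimes P_\alpha)$ is locally constant on the connected scheme $\hat A$; evaluation at $\alpha=0_{\hat A}$, where $P_\alpha=\cali{O}_A$, gives the value $\chi_A([F])$. On the other hand, base change along a closed point $\{\alpha\}\hookrightarrow\hat A$ identifies this fibrewise Euler characteristic with the rank of $F_p([F])$, which gives the claim.

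Finally, the Pontryagin $\lambda_\star^n$ were defined by transport through $F_p$, so the identity $F_p\circ\lambda_\star^n=\lambda^n\circ F_p$ is tautological; combined with the ring isomorphism from the previous step, this both shows that $(K_0(A),\star,\lambda_\star)$ is genuinely a $\lambda$-ring and that $F_p$ is an isomorphism of $\Z$-augmented $\lambda$-rings. I expect the multiplicativity step to be the only real obstacle; even there the argument is standard, since all morphisms entering the relevant squares are flat or closed immersions into smooth varieties, and the base change identities of [SGA 6] apply directly without higher derived corrections.
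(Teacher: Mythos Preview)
Your proposal is correct and follows essentially the same route as the paper: bijectivity from Mukai's inversion, multiplicativity from base change and the projection formula together with the identity $(m\times\id{\hat A})^\ast P\simeq r_{13}^\ast P\otimes r_{23}^\ast P$, augmentation compatibility from base change to a point of $\hat A$, and $\lambda$-compatibility by definition. The only cosmetic difference is that the paper invokes the theorem of the cube for the key line-bundle identity on $A\times A\times\hat A$, whereas you phrase it fibrewise as the Theorem of the Square; these are equivalent here, and either reference suffices.
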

\begin{proof}
The homomorphism $F_p$ is by~\ref{F_qmF_pn} an isomorphism of abelian groups. Moreover $F_p$ commutes with the augmentations; i.e.
\[
 \chi_A(x)={\rm rk}_{\hat{A}}(F_px)
\]
which follows from the definitions and usual arguments (i.e. using base change and projection formula). Finally let $x,y\in K_0(A)$. It follows from the usual arguments that
\begin{equation*}
F_K(x\star y) ={\rm pr}_{3,\ast}\bigl(\proj{\ast}{1}{x}\cdot \proj{\ast}{2}{y}\cdot (m\times\id{\hat{A}})^\ast p\bigr)
\end{equation*}
where the projections are from $A\times_kA\times_k\hat{A}$. Note that by the theorem of the cube (\cite[II, \S 6]{mumford-1985}) there is an isomorphism
\[
(m\times\id{\hat{A}})^\ast (P)\simeq \proj{\ast}{13}{P}\otimes \proj{\ast}{23}{P}
\]
where $P$ is the Poincar\'e bundle on $A\times_k\hat{A}$. From this it follows that $F_p(x\star y)=F_p(x)\cdot  F_p(y)$. Therefore $F_p$ is an isomorphism of augmented algebras. Compatibility of $F_p$ with the $\lambda-$ring structures is evident.
\end{proof}
As usual define the power series
\[
 \lambda_\star (x)=\lambda_\star (x,t)=\sum_{n\geqq 0} \lambda_\star^n(x)t^n.
\]
From this the $\gamma-$ring structure of $K_\star$ is constructed; i.e. as series $$\gamma_\ast (x):=\lambda_\star (x,t/{1-t}).$$Finally the corresponding filtration is denoted by $\Filt{\bullet}{\star}$. Note that $\Filt{0}{\star}=K_\star$, $\Filt{1}{\star}=\kernel (\chi)=:I$ and in general $\Filt{q}{\star}$ is the subgroup generated by elements of the form
\[
 \gamma_\star^{i}(x)\star \gamma_\star^{j}(y)\star\dots\star \gamma_\star^{k}(z)
\]
with $i,j,\dots, k\geq 0$, $i+j+\dots +k\geq q$ and $x,y,\dots,z\in \Filt{1}{\star}$. It follows from the definition that
\[
 \Filt{n+1}{\star}\subseteq \Filt{n}{\star}.
\]
We consider this filtration on $K_\star=(K,\star)$ or on the additive group of $K=K_0(A)$. We call this the (Pontryagin) $\star-$filtration. Denote the associated quotient groups of this filtration by \[\grr{n}{\star}{K}:=\Filt{n}{\star}/{\Filt{n+1}{\star}}.\]
\begin{cor}
 $\Filt{n}{\star}=0$ for all $n\geq g+1$.
\end{cor}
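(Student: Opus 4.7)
The plan is to reduce the vanishing to a classical fact about the usual $\gamma$-filtration on $K_0(\hat{A})$ by transporting the question through the Fourier-Mukai transform, whose compatibility with the $\lambda$-ring structures is precisely the content of Theorem \ref{thm:fm-iso}.

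First I would observe that the $\gamma$-filtration of an augmented $\lambda$-ring is a canonical invariant of that structure: $\Filt{1}{\gamma}$ is the augmentation ideal and $\Filt{n}{\gamma}$ is generated by products of $\gamma$-operations applied to elements of $\Filt{1}{\gamma}$. Any isomorphism of augmented $\lambda$-rings must therefore send the $n$-th step of the $\gamma$-filtration to the $n$-th step of the $\gamma$-filtration. Applied to the isomorphism $F_p:(K_0(A),\star)\to K_0(\hat{A})$ of Theorem \ref{thm:fm-iso}, this gives $F_p(\Filt{n}{\star})=\Filt{n}{\gamma}K_0(\hat{A})$. In particular, since the Pontryagin $\lambda_\star$-structure is literally defined by $\lambda_\star^n(x)=F_p^{-1}\lambda^n(F_px)$, the identity $\gamma_\star^n(x)=F_p^{-1}\gamma^n(F_px)$ and the ring-isomorphism property of $F_p$ make this identification tautological.

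It therefore suffices to prove $\Filt{n}{\gamma}K_0(\hat{A})=0$ for $n\geq g+1$. This I would deduce from the standard fact, recalled in the preliminaries (and proved in [SGA 6, Exp. X]), that for any smooth scheme $S$ of dimension $d$ one has an inclusion $\Filt{n}{\gamma}K_0(S)\subseteq \Filt{n}{\rm top}K_0(S)$, while by definition $\Filt{n}{\rm top}K_0(S)=0$ as soon as $n>d$. Taking $S=\hat{A}$, which has dimension $g$, yields the desired vanishing.

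I do not expect any serious obstacle. The proof is essentially formal once Theorem \ref{thm:fm-iso} is in hand; the only point that deserves a sentence of care is the transport of $\gamma$-filtrations under an isomorphism of augmented $\lambda$-rings, but this is immediate from the definitions.
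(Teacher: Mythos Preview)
Your argument is correct and is essentially the same as the paper's: transport $\Filt{n}{\star}$ to $\Filt{n}{\gamma}K_0(\hat{A})$ via the $\lambda$-ring isomorphism $F_p$ of Theorem~\ref{thm:fm-iso}, then use $\Filt{n}{\gamma}\subseteq\Filt{n}{\rm top}=0$ for $n>g$. The paper records only the inclusion $F_p\Filt{n}{\star}\subseteq\Filt{n}{\gamma}$, which already suffices, whereas you note the (equally immediate) equality.
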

\begin{proof}
 By~\ref{thm:fm-iso} we know that $F_p\Filt{n}{\star}\subseteq \Filt{n}{\gamma}$. Since $F_p$ is an isomorphism and
 \[
  \Filt{n}{\gamma}\subseteq \Filt{n}{\rm top}
 \]
we conclude that $\Filt{n}{\star}=0$ for $n>{\rm dim}(A)=g$.
\end{proof}
\begin{remark}
 Also note that by definition $I^{\star n}\subseteq \Filt{n}{\star}$.
\end{remark}
\begin{cor}
$(m_A)_\ast\circ \gamma^n_\star=\gamma^n_\star\circ (m_A)_\ast$ for any $m\in \Z$ and $n\geq 0$.
\end{cor}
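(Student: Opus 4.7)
The plan is to transfer the statement from $K_0(A)$ with its Pontryagin $\lambda_\star$-structure to $K_0(\hat{A})$ with its usual $\lambda$-structure via the Fourier--Mukai isomorphism of Theorem~\ref{thm:fm-iso}. The key additional observation is that pushforward by $m_A$ on the Pontryagin side should correspond to pullback by $m_{\hat{A}}$ on the dual side, and pullback along any morphism of schemes is automatically a homomorphism of $\lambda$-rings and hence commutes with every $\gamma^n$.

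The first step is to establish the intertwining identity
\[
 F_p \circ (m_A)_\ast = (m_{\hat{A}})^\ast \circ F_p.
\]
For the left-hand side I would apply base change in the Cartesian square obtained by crossing $m_A\colon A\to A$ with $\id{\hat{A}}$, then the projection formula, and finally the isomorphism $(m_A\times \id{\hat{A}})^\ast P\simeq P^{\otimes m}$ already invoked in the proof of Proposition~\ref{F_qmF_pn}; the net effect is to replace $p$ by $p^m$ in the defining formula of $F_p$, so the left-hand side equals $F_{p^m}$. For the right-hand side, base change along $\id{A}\times m_{\hat{A}}$ together with the dual bilinearity $(\id{A}\times m_{\hat{A}})^\ast P\simeq P^{\otimes m}$ (another immediate consequence of the biextension property of the Poincaré bundle, pinned down by rigidity on $\{0_A\}\times \hat{A}$) reduces the right-hand side to the same $F_{p^m}$.

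Once the intertwining is in hand, the conclusion is formal. Since $m_{\hat{A}}$ is a morphism of schemes, $(m_{\hat{A}})^\ast$ is a homomorphism of the usual $\lambda$-ring $K_0(\hat{A})$, so $(m_{\hat{A}})^\ast \circ \gamma^n = \gamma^n \circ (m_{\hat{A}})^\ast$. By Theorem~\ref{thm:fm-iso} the Pontryagin $\gamma$-operations are obtained by transport of structure, $\gamma^n_\star = F_p^{-1}\circ \gamma^n\circ F_p$. Combining these with the intertwining identity yields
\[
 (m_A)_\ast\circ \gamma^n_\star = F_p^{-1}\circ (m_{\hat{A}})^\ast\circ \gamma^n\circ F_p = F_p^{-1}\circ \gamma^n\circ (m_{\hat{A}})^\ast\circ F_p = \gamma^n_\star\circ (m_A)_\ast,
\]
which is the desired commutation.

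The only non-formal input is the bilinearity $(\id{A}\times m_{\hat{A}})^\ast P\simeq P^{\otimes m}$, and this is the direct dual analogue of the identity already used in Proposition~\ref{F_qmF_pn}, so the main obstacle is essentially bookkeeping: tracking the base changes and projection formula carefully enough that both sides of the intertwining collapse to the same $F_{p^m}$. Everything after that is naturality of $\gamma^n$ under morphisms of $\lambda$-rings.
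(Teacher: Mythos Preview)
Your proof is correct and follows essentially the same approach as the paper: the paper's two-line proof simply notes that $(m_{\hat{A}})^\ast$ is a $\lambda$-morphism of $\widehat{K}$ and invokes Theorem~\ref{thm:fm-iso}, leaving the intertwining identity $F_p\circ (m_A)_\ast = (m_{\hat{A}})^\ast\circ F_p$ implicit. You have spelled out exactly this intertwining (both sides equal $F_{p^m}$) and then carried out the formal transport-of-structure argument, which is precisely what the paper's terse proof is abbreviating.
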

\begin{proof}
Note that $(m_{\hat{A}})^\ast$ is a $\lambda-$morphism of $\widehat{K}$. The result follows from~\ref{thm:fm-iso}.
\end{proof}
Using~\ref{thm:fm-iso} we obtain a well-defined homomorphism
\[
 \grr{}{}{F_p}\colon \grr{\bullet}{\star}{A}\to \grr{\bullet}{\gamma}{\hat{A}}
\]
which is graded of degree $0$. From the general theory of $\lambda-$rings and the Chern character in [SGA 6, Exp. V] we know that there is a theory of Chern charcters ${\rm ch}_\star\colon K_\star\to \grr{\bullet}{\star}{K}\otimes\Q$ and for which there is a commutative diagram of isomorphisms of $\Q-$algebras:
\[
 \xymatrix{
 K_\star\otimes\Q\ar[r]^-{F_p} \ar[d]^-{{\rm ch}_\star} & \widehat{K}\otimes\Q\ar[d]^-{{\rm ch}} \\ \grr{\bullet}{\star}{K}\otimes\Q\ar[r]^-{\grr{}{}{F_p}} & \grr{\bullet}{\gamma}{\widehat{K}}\otimes\Q.
 }
\]
We may reformulate this relationship between the two Chern character: let
$$\rho_K\colon ={F^{\rm gr}_{{p}}}^{-1}\circ \grr{}{}{F_p}\colon \grr{\bullet}{\star}{K}\otimes\Q\to \grr{\bullet}{\gamma}{K}\otimes\Q$$
Using the functoriality of the Chern character, the diagram above and the Grothendieck-Riemann-Roch we obtain that the diagram
\[
  \xymatrix{
 K_0(A)\otimes\Q\ar[d]^-{{\rm ch}_\star} \ar[drr]^-{{\rm ch}}\\
 \grr{\bullet}{\star}{K}\otimes\Q\ar[rr]^-{\rho_K} && \grr{\bullet}{\gamma}{K}\otimes\Q
 }
\]
of vector spaces and linear maps is commutative. Moreover for any $x,y\in \grr{\bullet}{\star}{K}$ we have
\[
 \rho_K(x\star y)=\rho_K(x)\star \rho_K(y)
\]
where Pontryagin product on $\grr{\bullet}{\gamma}{K}\otimes\Q$ is (well-)defined similar to the case of the Grothendieck ring. We also have:
\begin{cor}\label{lem:ch-star}
For any $x,y\in K$, we have $\chern{x\star y}=\chern{x}\star \chern{y}$.
\end{cor}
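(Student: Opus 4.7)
The plan is to chain together two multiplicativity properties using the commutative triangle $\chern{\cdot}=\rho_K\circ {\rm ch}_\star$ that was just established.

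First I would invoke the general fact from [SGA 6, Exp. V] that for any augmented $\lambda$-ring the associated Chern character is a ring homomorphism from the ring to its graded gamma ring; applied to the $\lambda_\star$-structure on $K_\star=(K_0(A),\star)$ this gives that
\[
{\rm ch}_\star(x\star y)={\rm ch}_\star(x)\star {\rm ch}_\star(y)
\]
in $\grr{\bullet}{\star}{K}\otimes\Q$, since the underlying product for $\chern{\star}$ is the Pontryagin product by construction. (Alternatively, transport this through the isomorphism $F_p$ of Theorem~\ref{thm:fm-iso}: the ordinary Chern character on $\widehat{K}\otimes\Q$ is multiplicative, and the outer square of the diagram just displayed relates it to ${\rm ch}_\star$.)

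Next I would use the statement, already recorded in the paragraph preceding the corollary, that $\rho_K$ is a ring homomorphism for the Pontryagin products on $\grr{\bullet}{\star}{K}\otimes\Q$ and $\grr{\bullet}{\gamma}{K}\otimes\Q$. Combining these two facts and the identity ${\rm ch}=\rho_K\circ {\rm ch}_\star$ gives
\[
\chern{x\star y}=\rho_K({\rm ch}_\star(x\star y))=\rho_K({\rm ch}_\star(x)\star {\rm ch}_\star(y))=\rho_K({\rm ch}_\star(x))\star \rho_K({\rm ch}_\star(y))=\chern{x}\star \chern{y},
\]
which is the assertion.

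The only potentially nontrivial point is the multiplicativity of $\rho_K$ for the Pontryagin products, but that has been isolated already; the corollary is then just a formal consequence of the two $\lambda$-ring pictures fitting together through $F_p$ and Grothendieck--Riemann--Roch.
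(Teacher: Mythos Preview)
Your proposal is correct and matches the paper's intended argument: the corollary is stated without proof in the paper, immediately after recording the identity ${\rm ch}=\rho_K\circ {\rm ch}_\star$ and the multiplicativity of $\rho_K$ for the Pontryagin products, so the chain ${\rm ch}_\star$ multiplicative $\Rightarrow$ $\rho_K$ multiplicative $\Rightarrow$ ${\rm ch}$ multiplicative is exactly what is meant. One could alternatively prove it directly from Grothendieck--Riemann--Roch for $m\colon A\times A\to A$ (trivial Todd class, so ${\rm ch}$ commutes with $m_\ast$) together with the usual multiplicativity and functoriality of ${\rm ch}$, bypassing $\rho_K$ and ${\rm ch}_\star$ entirely; but your route is the one the surrounding text sets up.
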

By definition ${\rho}_K([0_A])=[0_A]$. Putting these together, we conclude that $\rho_K$ is a $\Q-$algebra (non-graded) isomorphism.

\section{Comparison of two Gamma filtrations}

We let $\psi^n_\star$ for $n\geq 1$ be the Adams operations corresponding to the Pontryagin $\lambda_\star-$structure on $K_\star=(K_0(A),\star)$. Similarly $\psi^n$ for $n\geq 1$ denote the Adams operations on $K$ corresponding to the usual (Grothendieck) $\lambda-$ring structure. 

We denote by $K^{(n)}\subseteq K_0(A)\otimes\Q$ the eigenspace of $\psi^\alpha$'s corresponding to the eigenvalue $\alpha^n$ and $K^{(n)}_\star=:K_{(n)}$ that of $\psi_\star^\alpha$ on $K_\star\otimes\Q$. Therefore
\begin{equation*}\label{f_pk_n}
 F_p K_{(n)}=K_{\hat{A}}^{(n)}=:\widehat{K}^{(n)}.
\end{equation*}
\begin{lemma}\label{lem:f_pkn}
 $F_p {K}^{(n)}=\widehat{K}_{(n)}$ for any integer $n\geq 0$.
\end{lemma}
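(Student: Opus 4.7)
The plan is to deduce this from the already-established dual statement $F_p K_{(n)} = \widehat{K}^{(n)}$ by applying Theorem~\ref{thm:fm-iso} with the roles of $A$ and $\hat{A}$ interchanged, and then using the composition formula of Proposition~\ref{F_qmF_pn}.

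First I would observe that Theorem~\ref{thm:fm-iso} is formulated for an arbitrary abelian variety and its Fourier-Mukai transform. Applying it to $\hat{A}$ (whose dual is naturally $A$) and to the Poincar\'e class $q$ on $\hat{A}\times A$ yields that
\[
F_q\colon (K_0(\hat{A}),\star)\to K_0(A)
\]
is an isomorphism of augmented $\lambda$-rings. In particular, $F_q$ intertwines the Pontryagin Adams operations $\psi^\alpha_\star$ on $\widehat{K}\otimes\Q$ with the usual Adams operations $\psi^\alpha$ on $K\otimes\Q$, so it identifies $\widehat{K}_{(n)}$ with $K^{(n)}$, giving $F_q\widehat{K}_{(n)}=K^{(n)}$.

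Next I would reduce the claim $F_pK^{(n)}=\widehat{K}_{(n)}$ to the identity $F_qF_pK^{(n)}=K^{(n)}$, which is legitimate since $F_q$ is a $\Q$-linear isomorphism after tensoring with $\Q$. By Proposition~\ref{F_qmF_pn} (specialized to $m=n=1$) we have $F_q\circ F_p=(-1)^g(-1_A)^\ast$, so the identity to verify becomes
\[
(-1)^g(-1_A)^\ast K^{(n)}=K^{(n)}.
\]
The scalar $(-1)^g$ is irrelevant for a subspace, and $(-1_A)^\ast$ is a $\lambda$-ring endomorphism of $K_0(A)$ because it is the pullback along a morphism of schemes and $\lambda^k$ is functorial in this sense. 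Consequently $(-1_A)^\ast$ commutes with every $\psi^\alpha$ and therefore preserves each eigenspace $K^{(n)}$.

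The main (very mild) obstacle here is simply noticing that the proof of Theorem~\ref{thm:fm-iso} applies verbatim to the pair $(\hat{A},A)$; once that symmetry is invoked, everything else is a purely formal combination of Proposition~\ref{F_qmF_pn} with the $\lambda$-functoriality of the group-scheme endomorphism $-1_A$.
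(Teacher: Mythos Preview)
Your proof is correct and follows essentially the same route as the paper: apply $F_q$ to both sides, use that the displayed identity $F_pK_{(n)}=\widehat{K}^{(n)}$ for $\hat{A}$ gives $F_q\widehat{K}_{(n)}=K^{(n)}$, and then note that $F_q\circ F_p=(-1)^g(-1_A)^\ast$ preserves $K^{(n)}$. The paper's proof is simply a terser version of yours, leaving the invariance of $K^{(n)}$ under $(-1_A)^\ast$ implicit.
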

\begin{proof}
This is equivalent to $F_q\circ F_p {K}^{(n)}=F_q\widehat{K}_{(n)}$. That is if ${K}^{(n)}=F_q\widehat{K}_{(n)}$ which is the equation appearing just before the assertion applied for $\widehat{A}$.
\end{proof}
\begin{proposition}\label{prop:eigenspace-decomp}
There is a direct sum decomposition
\[
 K_0(A)\otimes\Q=K^{(0)}\oplus K^{(1)}\oplus\dots\oplus K^{(g)}.
\]
\end{proposition}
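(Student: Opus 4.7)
The plan is to use the standard diagonalization-by-Adams-operations trick, made possible here by the fact that the gamma filtration on $V := K_0(A)\otimes\Q$ has only finitely many nonzero pieces. Specifically, from the preliminaries we have $\Filt{n}{\gamma}\subseteq \Filt{n}{\rm top}$ (rationally, via $\rho$), and $\Filt{n}{\rm top}=0$ for $n>\dime{A}=g$; hence $\Filt{g+1}{\gamma}\otimes\Q=0$, so $V$ is a filtered $\Q$-vector space of length at most $g+1$. The only input from the $\lambda$-theory that I will use (SGA 6, Exp.~V) is that each Adams operation $\psi^\alpha$ preserves the gamma filtration and acts on $\grr{q}{\gamma}{K}\otimes\Q$ as multiplication by $\alpha^q$; equivalently, $(\psi^\alpha-\alpha^q)$ sends $\Filt{q}{\gamma}\otimes\Q$ into $\Filt{q+1}{\gamma}\otimes\Q$.

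With this in hand, I would first diagonalize a single Adams operation, say $\psi:=\psi^2$. Applying the factors of $P(T)=\prod_{q=0}^{g}(T-2^q)$ in order $q=0,1,\dots,g$ to $V=\Filt{0}{\gamma}\otimes\Q$, each factor drops the filtration index by one, so $P(\psi)V\subseteq \Filt{g+1}{\gamma}\otimes\Q=0$. Since the roots $2^0,2^1,\dots,2^g$ are pairwise distinct, $\psi$ is semisimple and
\[
V=\bigoplus_{q=0}^{g}V_q,\qquad V_q=\ker(\psi-2^q).
\]

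Next I would show that this decomposition splits the gamma filtration, i.e.\ $\Filt{n}{\gamma}\otimes\Q=\bigoplus_{q\geq n}V_q$. For the inclusion $V_q\subseteq \Filt{q}{\gamma}\otimes\Q$, apply $\prod_{q'<q}(\psi-2^{q'})$ to $v\in V_q$: on the one hand this equals $\prod_{q'<q}(2^q-2^{q'})\cdot v$, a nonzero scalar multiple of $v$; on the other hand, applying the factors in order $q'=0,1,\dots,q-1$ to $V=\Filt{0}{\gamma}\otimes\Q$ lands in $\Filt{q}{\gamma}\otimes\Q$ by the filtration-shifting property. The reverse containment $\Filt{n}{\gamma}\otimes\Q\subseteq \bigoplus_{q\geq n}V_q$ follows similarly by observing that the projector $V\to V_{q_0}$ (for $q_0<n$) is a scalar multiple of $\prod_{q\neq q_0}(\psi-2^q)$; grouping the factors with $q\geq n$ and applying them first to any element of $\Filt{n}{\gamma}\otimes\Q$ drops the filtration into $\Filt{g+1}{\gamma}\otimes\Q=0$.

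Finally, I would upgrade the $\psi^2$-eigenspaces to simultaneous eigenspaces. For arbitrary $\alpha\in\Z$, $\psi^\alpha$ commutes with $\psi^2$ and so preserves each $V_q$; moreover $(\psi^\alpha-\alpha^q)V_q\subseteq V_q\cap \Filt{q+1}{\gamma}\otimes\Q$, which is $0$ by the splitting just established. Therefore $\psi^\alpha$ acts on $V_q$ as $\alpha^q$ for every $\alpha$, so $V_q=K^{(q)}$ and the proposition follows. The only step that takes any thought is the compatibility between the $\psi^2$-eigenspace decomposition and the gamma filtration in the third paragraph; everything else is formal from the finite length of $\Filt{\bullet}{\gamma}\otimes\Q$ and the action of $\psi^\alpha$ on graded pieces.
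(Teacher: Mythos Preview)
Your argument is correct. The paper's proof, however, goes a different (and shorter) route: it invokes the Chern character isomorphism ${\rm ch}\colon K_0(A)\otimes\Q\xrightarrow{\sim}\grr{\bullet}{\gamma}{K}\otimes\Q$ from [SGA 6, Exp.~V] and observes that $x\in K^{(q)}$ if and only if ${\rm ch}(x)\in\grr{q}{\gamma}{K}\otimes\Q$, so the eigenspace decomposition is just the pullback of the grading on the target. What you do instead is reprove this decomposition by hand, using only the finiteness of the gamma filtration and the action of $\psi^\alpha$ on graded pieces; this is essentially the linear-algebra content that underlies the construction of ${\rm ch}$ itself, so your argument is more self-contained but longer. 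The paper's approach buys brevity by citing a packaged result; yours buys transparency by avoiding the black box. Both are standard and equivalent.
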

\begin{proof}
It is not difficult to see that $x\in K^{(q)}$ if and only if
\[
 \chern{x}\in \grr{q}{\gamma}{K}\otimes\Q.
\]
Since the Chern character is an isomorphism, the result follows.
\end{proof}
\begin{remark}\label{rem:eigenspace-decomp}
Similarly, it follows from the results on ${\rm ch}_\star$ that $K_0(A)\otimes\Q$ is a direct sum of $K_{(q)}$'s.
\end{remark}
\begin{lemma}\label{lem:beauville-1986-prop-1}
For integers $m,n\geq 0$ and an element $x\in K^{(n)}$, the following assertions are equivalent:
\begin{enumerate}
 \item $x\in K^{(n)}\cap K_{(m)}$,
 \item $k_A^\ast (x)=k^{g+n-m}x$ for any integr $k\in \Z$.
\end{enumerate}
\end{lemma}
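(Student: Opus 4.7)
The plan is to transfer the question to the dual abelian variety $\hat A$ via the Fourier--Mukai transform $F_p$, and then compare eigenvalues on $\widehat{K}\otimes\mathbf Q$ using the Chern character and the Beauville decomposition on Chow groups. Set $\hat y = F_p(x)$. By Lemma~\ref{lem:f_pkn} and the identification $F_pK_{(m)}=\widehat K^{(m)}$ preceding it, the condition $x\in K^{(n)}\cap K_{(m)}$ is equivalent to $\hat y\in \widehat K_{(n)}\cap\widehat K^{(m)}$.

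First I would rewrite the pullback $k_A^*$ through $F_p$. Specializing Proposition~\ref{F_qmF_pn} with $(n,m)=(1,-k)$ gives $k_A^* = (-1)^g F_{q^{-k}}\circ F_p$. Applying the same proposition on $\hat A$ (so that the roles of $p$ and $q$ are swapped) with $(m,n)=(1,-k)$ yields
\[
F_p\circ F_{q^{-k}} \;=\; (-1)^g(-1_{\hat A})^*\circ(-k_{\hat A})_*.
\]
Since $(-1)^*=(-1)_*$ on any abelian variety, the right-hand side collapses to $(-1)^g(k_{\hat A})_*$, and composing gives the clean conjugation formula
\[
F_p\circ k_A^* \;=\; (k_{\hat A})_*\circ F_p.
\]
Because $F_p$ is injective on $K_0(A)\otimes\mathbf Q$, the condition $k_A^*(x)=k^{g+n-m}x$ for every $k\in\mathbf Z$ is therefore equivalent to $(k_{\hat A})_*(\hat y)=k^{g+n-m}\hat y$ for every $k$.

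It remains to check that for $\hat y\in \widehat K_{(n)}$ this latter eigenvalue condition is equivalent to $\hat y\in \widehat K^{(m)}$. I would pass to Chow groups: by Theorem~\ref{GRR}, $\mathrm{ch}_{\hat A}(\hat y)=F^{\mathrm{gr}}_p(\mathrm{ch}_A(x))$, and by Proposition~\ref{prop:eigenspace-decomp} applied to $\hat A$, $\hat y\in\widehat K^{(m)}$ amounts to $\mathrm{ch}_{\hat A}(\hat y)\in CH^m(\hat A)\otimes\mathbf Q$. Using the compatibility of the Fourier transform with the Beauville decomposition from~\cite{Beauville_1986} (namely, $F^{\mathrm{gr}}_p$ sends $CH^n_s(A)$ into $CH^{g-n+s}_s(\hat A)$), one finds that $\mathrm{ch}_{\hat A}(\hat y)$ lies in $CH^m(\hat A)\otimes\mathbf Q$ precisely when $\mathrm{ch}_A(x)\in CH^n_{n+m-g}(A)$. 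By definition of $CH^n_s$ and the fact that $\mathrm{ch}_A$ is an isomorphism from $K^{(n)}\otimes\mathbf Q$ onto $CH^n(A)\otimes\mathbf Q$ intertwining $k_A^*$, this is in turn equivalent to $k_A^*(x)=k^{2n-(n+m-g)}x = k^{g+n-m}x$ for every $k$, which closes the loop.

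The main obstacle is the Beauville compatibility of $F^{\mathrm{gr}}_p$ used at the end. If one prefers not to cite this, it can be recovered $K$-theoretically from the conjugation identity $F_p\circ k_A^* = (k_{\hat A})_*\circ F_p$ above (specialized to $k=-1$ to intertwine $(-1_A)^*$ with $(-1_{\hat A})^*$, hence preserving the parity invariant $s$) together with the elementary eigenvalue computation $(k_{\hat A})_* = k^{2g-2m+s}$ on $CH^m_s(\hat A)$, which follows from $(k_{\hat A})_*\circ(k_{\hat A})^*=k^{2g}\,\mathrm{id}$.
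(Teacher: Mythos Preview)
Your main argument is correct and coincides with the paper's intention: the paper simply says the lemma follows by the same reasoning as in \cite[Prop.~1]{Beauville_1986}, and your third paragraph does precisely that reduction via $\mathrm{ch}$ and Theorem~\ref{GRR}, using Beauville's identification $F^{\rm gr}_pCH^n_s(A)=CH^{g-n+s}_s(\hat A)$. Note that the conjugation identity $F_p\circ k_A^\ast=(k_{\hat A})_\ast\circ F_p$ you derive is correct (and is the $K_0$-level form of what Beauville uses) but is not actually needed for the equivalence: once you have $\hat y\in\widehat K^{(m)}\Longleftrightarrow \mathrm{ch}_A(x)\in CH^n_{n+m-g}(A)\Longleftrightarrow k_A^\ast(x)=k^{g+n-m}x$, the loop is already closed without passing through $(k_{\hat A})_\ast$.

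The alternative sketched in your final paragraph, however, does not close. From $F_p\circ k_A^\ast=(k_{\hat A})_\ast\circ F_p$ you get that $(k_{\hat A})_\ast$ acts on $\mathrm{ch}_{\hat A}(\hat y)$ by $k^{g+n-m}$; writing $\mathrm{ch}_{\hat A}(\hat y)=\sum z_{j,t}$ with $z_{j,t}\in CH^j_t(\hat A)$, this forces $2g-2j+t=g+n-m$ for every nonzero $z_{j,t}$. The dual conjugation $F_q\circ k_{\hat A}^\ast=(k_A)_\ast\circ F_q$ yields $2j-t=g-n+m$, but this is the \emph{same} linear relation (their sum is the tautology $2g=2g$), and the $k=-1$ specialization only gives $t\equiv s\pmod 2$. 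So you obtain one constraint on $(j,t)$, not two, and cannot conclude $j=m$ without the codimension shift $j=g-n+s$, which is exactly the content of \cite[Prop.~1]{Beauville_1986}. In short, the shortcut you propose is circular; stick with citing Beauville's result, as the paper does.
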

\begin{proof}
This can be proved using the same arguments as those in the proof of~\cite[Prop. 1]{Beauville_1986}. Details are left to the reader.
\end{proof}
\begin{lemma}\label{lem:eigenspace-decomp-2}
 For any integer $q\geq 0$, there is a direct sum decomposition
 \[
  K^{(q)}=K^q_0\oplus K^q_1\oplus\dots\oplus K^q_g
 \]
where $K^q_j:=K_{(j)}\cap K^{(q)}$ for $j\geq 0$.
\end{lemma}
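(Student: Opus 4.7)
The plan is to transport Beauville's eigenspace decomposition of $CH^q(A)\otimes\Q$ back to $K^{(q)}$ through the Chern character, then reinterpret the resulting pieces via Lemma \ref{lem:beauville-1986-prop-1}.

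First, by the criterion used in the proof of Proposition \ref{prop:eigenspace-decomp}, the Chern character ${\rm ch}_A$ restricts to an isomorphism $K^{(q)}\simeq \grr{q}{\gamma}{K}\otimes\Q\simeq CH^q(A)\otimes\Q$. Naturality of the Chern character under the isogeny $k_A\colon A\to A$ shows that this isomorphism is $k_A^\ast$-equivariant for every $k\in\Z$; in particular $k_A^\ast$ preserves $K^{(q)}$, which also follows from the fact that $k_A^\ast$ is a ring homomorphism (with respect to the usual product) and hence commutes with every Adams operation $\psi^\alpha$.

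Second, I would invoke Beauville's decomposition \cite[Prop.~1]{Beauville_1986},
\[
CH^q(A)\otimes\Q \;=\; \bigoplus_{s} CH^q_s(A),
\]
in which $CH^q_s(A)$ is the simultaneous $k^{2q-s}$-eigenspace of the family $\{k_A^\ast\}_{k\in\Z}$. Given $x\in K^{(q)}$, write ${\rm ch}_A(x)=\sum_s y_s$ with $y_s\in CH^q_s(A)$ and set $x_s:={\rm ch}_A^{-1}(y_s)\in K_0(A)\otimes\Q$. Since $y_s$ lies in $CH^q(A)\otimes\Q$, the criterion recalled in the first step forces $x_s\in K^{(q)}$; and the $k_A^\ast$-equivariance of ${\rm ch}_A$ forces $k_A^\ast(x_s)=k^{2q-s}x_s$ for every $k\in\Z$.

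Finally, I would apply Lemma \ref{lem:beauville-1986-prop-1} to $x_s\in K^{(q)}$: setting $j:=g-q+s$, the eigenvalue equation becomes $k_A^\ast(x_s)=k^{g+q-j}x_s$, placing $x_s$ in $K^{(q)}\cap K_{(j)}=K^q_j$. Hence $x=\sum_j x_j$ with $x_j\in K^q_j$, which shows $K^{(q)}=\sum_j K^q_j$; directness of the sum is transferred from that of Beauville's decomposition by the isomorphism ${\rm ch}_A$. The main obstacle is really only bookkeeping: Beauville parametrises his eigenspaces by $k^{2q-s}$ while Lemma \ref{lem:beauville-1986-prop-1} uses $k^{g+q-j}$, and one must check that the substitution $j=g-q+s$ identifies the two indexings and remains within the range $0\leq j\leq g$ that appears in the statement.
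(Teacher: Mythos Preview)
Your argument is correct, but it takes a different route from the paper's. The paper argues internally to the $K_0$-framework: for directness it applies $\psi_\star^n$ to a relation $\sum_j x_j=0$ with $x_j\in K^q_j$ and uses a Vandermonde argument; for spanning it pushes $x\in K^{(q)}$ through the Fourier--Mukai isomorphism $F_p$ to $\widehat{K}$, decomposes $F_p(x)$ via Proposition~\ref{prop:eigenspace-decomp} for $\hat A$, and pulls the pieces back, invoking (a variant of) Lemma~\ref{lem:beauville-1986-prop-1} only to verify that each piece still lies in $K^{(q)}$. You instead pass through the Chern character to $CH^q(A)\otimes\Q$, import Beauville's eigenspace decomposition of the Chow group wholesale, and then use Lemma~\ref{lem:beauville-1986-prop-1} in the other direction to land each piece in the correct $K_{(j)}$. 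Your approach makes the link to Beauville's original result more transparent (which is thematically apt for this paper), at the cost of leaning on an external Chow-group statement and on the bookkeeping you flag about the index range; the paper's approach is more self-contained, using only the Fourier--Mukai machinery already set up, and gets the range $0\le j\le g$ for free from Proposition~\ref{prop:eigenspace-decomp}. One small point worth tightening in your write-up: Lemma~\ref{lem:beauville-1986-prop-1} is stated for $m\ge 0$, so you should explicitly cite Beauville's bound $q-g\le s\le q$ on the nonvanishing range of $CH^q_s(A)$ to ensure $j=g-q+s\in\{0,\dots,g\}$ before invoking it.
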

\begin{proof}
Let $x_j\in K^q_j$ for $j=0,1,\dots,g$ be a family of elements of $K$ with $\sum x_j=0$. Applying $\psi_\star^n$ to this we obtain $\sum n^jx_j=0$. As this holds for all $n\geq 1$, we obtain $x_j=0$ for all $j$. Now let $x\in K^{(q)}$. Set $y=F_p(x)$. Using~\ref{prop:eigenspace-decomp} we may write
\[
 y=y_0+y_1+\dots+y_g,\quad y_j\in \widehat{K}^{(j)}.
\]
Set $x_j=F_p^{-1}(y_j)$. Therefore $x_j\in K_{(j)}$ and $x=\sum x_j$. Also note that by (a variant of)~\ref{lem:beauville-1986-prop-1} it follows that $x_j\in K^{(q)}$. The result follows.
\end{proof}
For a fixed integer $n\geq 1$, consider the abelian group homomorphism
\[
 \psi^n_\star\colon K_0(A)\otimes\Q\to K_0(A)\otimes\Q.
\]
\begin{proposition}\label{prop:omega_n}
The map
\[
 \pi_\star^n:=\psi_\star^n\otimes n^{-g}\id{}\colon K_0(A)\otimes\Q\to K_0(A)\otimes\Q
\]
is a morphism of $\Q-$augmented $\lambda-$rings.
\end{proposition}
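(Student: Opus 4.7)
The plan is to reduce everything to a short eigenvalue calculation on the bigrading
\[
 K_0(A) \otimes \Q \;=\; \bigoplus_{0 \le q, j \le g} K^q_j, \qquad K^q_j := K^{(q)} \cap K_{(j)},
\]
supplied by Proposition~\ref{prop:eigenspace-decomp} together with Lemma~\ref{lem:eigenspace-decomp-2}. On $K^q_j$ the usual Adams operation $\psi^m$ acts as the scalar $m^q$ and the Pontryagin Adams operation $\psi_\star^n$ acts as $n^j$, so $\pi_\star^n$ acts on $K^q_j$ as multiplication by $n^{j-g}$.

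First I would pin down that $1 \in K^0_g$: since $k_A^\ast(1) = 1 = k^{g+0-g}$ for every $k \in \Z$, Lemma~\ref{lem:beauville-1986-prop-1} places $1$ in $K^{(0)} \cap K_{(g)}$, so $\pi_\star^n(1) = n^{g-g} = 1$. Combined with $K^{(0)} \otimes \Q = \Q \cdot 1$ (from the Chern character isomorphism and $\grr{0}{\gamma}{K} \otimes \Q \simeq \Q$) and the vanishing of $K^0_j$ for $j \neq g$, the augmentation $\epsilon = {\rm rk}$ --- which is precisely the projection onto $K^0_g$ in the bigrading --- commutes with $\pi_\star^n$.

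Second, I need to show that the usual product respects the bigrading according to
\[
 K^{q_1}_{j_1} \cdot K^{q_2}_{j_2} \;\subseteq\; K^{q_1+q_2}_{j_1+j_2-g}.
\]
The inclusion into $K^{(q_1+q_2)}$ is automatic because each $\psi^m$ is a ring homomorphism for the usual product. For the $K_{(\cdot)}$-index I would compute, for $x \in K^{q_1}_{j_1}$ and $y \in K^{q_2}_{j_2}$,
\[
 k_A^\ast(xy) = k_A^\ast(x)\, k_A^\ast(y) = k^{(g+q_1-j_1)+(g+q_2-j_2)}\, xy,
\]
and then reapply Lemma~\ref{lem:beauville-1986-prop-1} to $xy \in K^{(q_1+q_2)}$ to conclude $xy \in K_{(j_1+j_2-g)}$. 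Multiplicativity of $\pi_\star^n$ is then immediate:
\[
 \pi_\star^n(xy) = n^{(j_1+j_2-g)-g}\, xy = n^{j_1-g}x \cdot n^{j_2-g}y = \pi_\star^n(x)\, \pi_\star^n(y).
\]

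Finally, compatibility with the usual $\lambda$-ring structure over $\Q$ reduces to commutation of $\pi_\star^n$ with every Adams operation $\psi^m$, which holds trivially because both operators act as scalars on each summand $K^q_j$. The main obstacle is the middle step: the interaction of the usual ring product with the $K_{(j)}$-grading coming from the Pontryagin $\lambda_\star$-structure is not tautological, and the whole argument hinges on Lemma~\ref{lem:beauville-1986-prop-1} to re-express membership in $K^{(q)} \cap K_{(j)}$ as the scalar condition $k_A^\ast(x) = k^{g+q-j}x$, so that the two otherwise unrelated gradings can be manipulated in the same terms.
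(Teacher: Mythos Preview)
Your argument is correct and follows the same architecture as the paper's proof: reduce to the simultaneous eigenspace decomposition $K=\bigoplus K^q_j$, then verify unit, multiplicativity, commutation with $\psi^m$, and compatibility with the augmentation on each piece. The one substantive difference is the multiplicativity step. The paper establishes $K_{(\alpha)}\cdot K_{(\beta)}\subseteq K_{(\alpha+\beta-g)}$ by invoking Corollary~\ref{lem:ch-star} (that ${\rm ch}$ intertwines the Pontryagin products), hence leaning on the Chern-character and Riemann--Roch machinery developed just before. You instead use Lemma~\ref{lem:beauville-1986-prop-1} to rewrite membership in $K^q_j$ as the scalar eigenvalue condition $k_A^\ast(x)=k^{g+q-j}x$, and then exploit that $k_A^\ast$ is a ring homomorphism for the ordinary product. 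Your route is more elementary and self-contained --- it bypasses ${\rm ch}_\star$ and $\rho_K$ entirely --- whereas the paper's route ties the statement more visibly to the Riemann--Roch/Fourier--Mukai picture. One small point worth making explicit in your write-up: when $j_1+j_2-g<0$ (or $q_1+q_2>g$) the conclusion $xy\in K_{(j_1+j_2-g)}$ should be read as $xy=0$, which follows from the direct-sum decomposition since no nonzero class can satisfy $k_A^\ast(xy)=k^{g+(q_1+q_2)-m}xy$ with $m$ outside $\{0,\dots,g\}$.
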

\begin{proof}
It is clear that $\pi_\star^n$ is an endomorphism of the additive group of $K_0(A)\otimes\Q$. We show that $\pi_\star^n$ is a $\lambda-$morphism. For this we proceed as follows. We shall remove $\Q$ from the notations and work exclusively with rational coefficients. Therefore $K$ will denote $K_0(A)\otimes\Q$. The eigenspace decompositions of the Adams operations on $K_\star$ gives the decomposition
\[
 K_\star=K_\star^{(0)}\oplus K_\star^{(1)}\oplus\dots\oplus K^{(g)}_\star
\]
where $x\in K_\star^{(q)}$ if and only if $\psi_\star^n(x)=n^qx$ for $n\geq 1$. Also note that $\pi_\star^n(1)=1$. Therefore to show that $n^{-g}\psi_\star^n$ is a ring homomorphism, it is enough to note that for $x\in K_\star^{(\alpha)}$ and $y\in K_\star^{(\beta)}$, using~\ref{lem:ch-star}, we have
\[
x\cdot y\in K_\star^{(\alpha)}\cdot K_\star^{(\beta)}\subseteq K_\star^{(\alpha+\beta-g)}.
\]
Next we show that $n^{-g}\psi^n_\star$ commutes with each $\lambda^\alpha$ or equivalently with each $\psi^\alpha$. But the equality $\psi^n_\star\circ \psi^\alpha=\psi^\alpha\circ\psi^n_\star$ holds on each $K^{(r)}\cap K_{(s)}$ and hence on $K$. Finally we show that for each $x\in K$ we have
\[
 \epsilon (n^{-g}\psi_\star^n(x))=\epsilon (x).
\]
This is linear in $x$. Let $x\in K_{(q)}$ for an integer $0\leq q\leq g$. It follows that $\epsilon (n^{-g}\psi_\star^n(x))=n^{q-g}\epsilon (x)$. Now we may write $x=x_0+x_1+\dots+x_g$ with $x_j\in K^{(j)}$. Then $\epsilon (x)=\epsilon (x_0)$. Therefor $\epsilon(x)=0$ unless $q=g$. The result follows. 
\end{proof}
\begin{remark}\label{rem:omega_n}
Note that $\psi^n_\star\otimes\id{\Q}$ being an Adams operation is an isomorphism. 
\end{remark}

Let $n\geq 1$. Define $\psi^n_\pi(x)=n^{g-q}x$ for $x\in K_{(q)}$ and extend this by linearity for all $x\in K$ (note that $\psi^n_\pi$ is the inverse of $\pi_\star^n$). These are easily seen to define a $\lambda-$ring structure $\pi$ on $K=K_0(A)\otimes\Q$ for which the Adams operations are given by $\psi^n_\pi$. Let $\epsilon_\pi$ be the projection map $K\to K_{(g)}$. Note that $K_{(g)}$ is a $\Q-$subalgebra of $K$ and $\epsilon_\pi$ is a $\Q-$algebra homomorphism. Considering $K_{(g)}$ with its canonical binomial $\lambda-$ring structure, the map $\epsilon_\pi$ makes $K$ a $K_{(g)}$-augmented $\lambda-$ring whose $\lambda$, $\gamma$ operations are denoted by $\lambda^i_\pi$, $\gamma^i_\pi$. 

Let $x\in K_{(q)}$ with $q<g$. Using the definition of $\lambda_\pi$ (and $\gamma_\pi$) in terms of the Adams operations $\psi_\pi^n$, i.e.
\[
 \lambda_\pi (x,t)=\exp (\sum_{n\geq 1}\frac{(-1)^{n-1}}{n}\psi_\pi^n(x)t^n)
\]
and the equation $\gamma_\pi (x,t)=\lambda_\pi (x,t/{1-t})$, it follows that for all $i\geq 1$ we have the equation
\begin{equation}\label{eq:gamma}
\gamma^i_\pi(x)=a(i;g-q,1)x+a(i;g-q,2)x^2+\dots+a(i;g-q,g)x^g
\end{equation}
where $a(i;g-q,m)$ are universally defined rational numbers depending only on $i,g-q,m$. Note that $x^i=0$ if $iq-(i-1)g<0$. 
\begin{example}
It is easy to see that
\[
 a(i;g-q,1)=(-1)^{i-1}(i-1)!\begin{Bmatrix}g-q\\i\end{Bmatrix}.
\]
where the last number given in braces is a Stirling number of the second kind. The other values of $a(i;g-q,m)$ may be considered as higher variants of these numbers.
\end{example}
Using the gamma structure $\gamma_\pi$ and the augmentation $\epsilon_\pi$, we can as usual (cf.~\ref{thm:fm-iso}f.) define  a filtration of $K$ by subgroups denoted by $\Filt{n}{\pi}$. Note that $\Filt{1}{\pi}={\rm ker}(\epsilon_\pi)$ and the filtration is in fact $K_{(g)}=\grr{0}{\pi}{K_0(A)\otimes\Q}-$linear. We may call this filtration the Pontryagin filtration on $K_0(A)\otimes\Q$.
\begin{lemma}
 $\Filt{n}{\pi}=0$ for $n>g$.
\end{lemma}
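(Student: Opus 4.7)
My plan is to prove the stronger inclusion
\[
\Filt{n}{\pi} \ \subseteq\ \bigoplus_{d \geq n} K_\pi^{(d)} \qquad (n \geq 1),
\]
where $K_\pi^{(d)} := K_{(g-d)}$ denotes the eigenspace of $\psi_\pi^m$ at eigenvalue $m^d$. Since $K_\pi^{(d)} = 0$ for $d > g$, the case $n > g$ of this inclusion yields $\Filt{n}{\pi} = 0$.

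I would first use the multiplicativity $\gamma_\pi(x+y,t) = \gamma_\pi(x,t)\,\gamma_\pi(y,t)$ together with the decomposition $\Filt{1}{\pi} = \bigoplus_{d \geq 1} K_\pi^{(d)}$ to assume that a generator of $\Filt{n}{\pi}$ has the shape $\gamma_\pi^{i_1}(x_1)\cdots \gamma_\pi^{i_k}(x_k)$ with $x_j \in K_\pi^{(d_j)}$ homogeneous, $d_j \geq 1$, and $\sum_j i_j \geq n$. The relation $K_{(a)}\cdot K_{(b)} \subseteq K_{(a+b-g)}$ established in the proof of~\ref{prop:omega_n} translates, in $\pi$-weights, into $K_\pi^{(a)}\cdot K_\pi^{(b)} \subseteq K_\pi^{(a+b)}$, so it suffices to prove the single-factor assertion: for $x \in K_\pi^{(d)}$ with $d \geq 1$ and any $i \geq 1$,
\[
\gamma_\pi^i(x) \in \bigoplus_{d' \geq i} K_\pi^{(d')}.
\]
Equation~\eqref{eq:gamma} already writes $\gamma_\pi^i(x) = \sum_{m \geq 1} a(i;d,m)\,x^m$, and $x^m \in K_\pi^{(md)}$, so this assertion is equivalent to the vanishing $a(i;d,m) = 0$ whenever $md < i$.

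To extract this vanishing I would use the generating-series identity
\[
\gamma_\pi(x,t) = \exp\bigl(x\, g_d(t)\bigr),\qquad g_d(t) := f_d\bigl(t/(1-t)\bigr),\qquad f_d(t) := \sum_{n \geq 1}(-1)^{n-1} n^{d-1} t^n,
\]
which follows at once from the definition $\lambda_\pi(x,t) = \exp\bigl(\sum_{n \geq 1}\tfrac{(-1)^{n-1}}{n}\psi_\pi^n(x)\,t^n\bigr)$ combined with $\psi_\pi^n(x) = n^d x$. Expanding the exponential identifies $a(i;d,m) = \tfrac{1}{m!}[t^i]g_d(t)^m$, so the required vanishing reduces to the claim that $g_d(t)$ is a polynomial in $t$ of degree at most $d$: then $g_d(t)^m$ has degree $\leq md$, killing the coefficient of $t^i$ as soon as $i > md$.

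The main obstacle is precisely this polynomial-degree claim on $g_d(t)$. I would prove it by induction, starting from $f_1(t) = t/(1+t)$ and using the recursion $f_{d+1}(t) = t\, f_d'(t)$ to obtain, by an elementary computation, $f_d(t) = P_d(t)/(1+t)^d$ for some polynomial $P_d$ with $\deg P_d \leq d$. The substitution $u = t/(1-t)$ then turns $(1+u)^d$ into $1/(1-t)^d$, which exactly cancels the denominator and leaves $g_d(t) = P_d\bigl(t/(1-t)\bigr)(1-t)^d$ — manifestly a polynomial in $t$ of degree at most $d$.
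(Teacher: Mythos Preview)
Your proof is correct, and it takes a genuinely different route from the paper's argument.

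The paper proceeds in three steps. First, it exploits the \emph{universality} of the coefficients $a(i;g-q,m)$: the same equation~\eqref{eq:gamma} holds in the ordinary Grothendieck $\lambda$-ring, so by feeding in a carefully chosen element $z\in K^{(g-q)}$ built from a symmetric line bundle (where $\gamma^n(z)=0$ is already known for $n>g$ because $\Filt{g+1}{\gamma}=0$), one reads off $a(n;g-q,m)=0$ for $n>g$ in the range $m(g-q)\le g$; outside that range $x^m$ lands in a vanishing $K_{(q')}$. This gives $\gamma_\pi^n(x)=0$ for homogeneous $x$ and $n>g$. Second, a pigeonhole argument together with $\Filt{1}{\pi}\subseteq\Filt{1}{\gamma}$ and $\Filt{g+1}{\gamma}=0$ forces $\Filt{n}{\pi}=0$ for $n>g^2$. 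Third, separatedness of the filtration allows invoking the general Chern-character machinery to descend from $n>g^2$ to $n>g$.

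Your argument bypasses all of this by proving, through an elementary induction on the closed form $f_d(t)=P_d(t)/(1+t)^d$, that $g_d(t)$ is a polynomial of degree at most $d$. This yields the sharper coefficient vanishing $a(i;d,m)=0$ whenever $md<i$, which in one stroke gives the splitting $\Filt{n}{\pi}\subseteq\bigoplus_{d\ge n}K_\pi^{(d)}$ and hence the lemma. What you gain is a self-contained and purely combinatorial proof that never touches the geometry of $A$ (no symmetric line bundles, no comparison with $\Filt{\bullet}{\gamma}$, no appeal to the discreteness of the filtration). What the paper's route gains is that it illustrates how the two $\lambda$-structures interact: the vanishing for $\gamma_\pi$ is inherited from the known vanishing for $\gamma$ via universality.
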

\begin{proof}
We first show that if $x\in K_{(q)}$ with $0\leq q<g$, then we have $\gamma_\pi^{n}(x)=0$ for all $n\geq g+1$. The equation~\eqref{eq:gamma} is valid in any $\lambda-$ring (for appropriate $x$). Let $z\in K^{(g-q)}$ with $z^i\neq 0$ for all $i(g-q)\leq g$; a suitable power of $K^{(1)}-$component (i.e. in the decomposition~\ref{prop:eigenspace-decomp}) of any symmetric line bundle works. Since $\gamma^{n}(z)=0$ for all $n>g$, it follows from the uniqueness of the decomposition~\ref{prop:eigenspace-decomp} that $a(n;g-q,i)=0$ for all $i,n$ with $i(g-q)\leq g$ and $n>g$. Next we show that $\Filt{n}{\pi}=0$ for $n>g^2$. For this let $0\neq \zeta\in \Filt{n}{\pi}$. It follows that there are $x,y,\dots,z$ in ${\rm ker}(\epsilon_\pi)$ and integers $i,j,\dots,k\geq 0$ with $i+j+\dots+k\geq n$ and
\[
 \zeta'=\gamma^{i}_\pi(x)\cdot \gamma^{j}_\pi(y)\dots \gamma^{k}_\pi(z)\neq 0.
\]
We may assume that $x,y,\dots,z$ are homogeneous for $\pi$, i.e. $x\in K_{q(x)}$ for some $0\leq q(x)<g$ and similarly for $y,\dots,z$.
Since $\Filt{g+1}{\gamma}=0$, $\Filt{1}{\pi}\subseteq \Filt{1}{\gamma}$ and $n>g^2$, it follows that at least one of $i,j,\dots,k$, say $i$, is $\geq g+1$. Therefore $\zeta'=0$ from the first step. This contradicts the assumption and hence $\Filt{n}{\pi}=0$ for $n>g^2$. Finally we show that $\Filt{n}{\pi}=0$ for $n>g$. For this note that by the second step and the eigenspace decomposition of $K$ with respect to $\psi_\pi^n$'s (from the general theory of Chern character of $\lambda-$rings with discrete $\gamma-$filtration) we see that if $\Filt{q}{\pi}\neq 0$ for some $q>g$, then there is an element $x\neq 0$ with $\psi_\pi^n(x)=n^qx$ for all $n\geq 1$. But this contradicts the decomposition~\ref{lem:eigenspace-decomp-2} (or equivalently~\ref{prop:eigenspace-decomp} applied to $K_\star$). The lemma is proved.
\end{proof}
\begin{conjecture}\label{conjecture-two-filtration}
 $\Filt{q}{\pi}\subseteq \Filt{q}{\gamma}$ for all $0\leq q\leq g$.
\end{conjecture}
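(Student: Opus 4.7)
The plan is to rewrite both filtrations in terms of the bigraded decomposition of $K:=K_0(A)\otimes\Q$ supplied by Lemma \ref{lem:eigenspace-decomp-2}, and then to recognize the resulting inclusion as Beauville's original conjecture. First I would identify $\Filt{q}{\gamma}$ and $\Filt{q}{\pi}$ with their eigenspace expressions. By the standard machinery of $\lambda$-rings with Chern character [SGA 6, Exp. V], a $\lambda$-ring over $\Q$ whose $\gamma$-filtration is discrete satisfies $\Filt{q}{\gamma}=\bigoplus_{r\geq q}K^{(r)}$. The $\pi$-structure of \S5 is another $\lambda$-ring structure on the same underlying $\Q$-vector space, augmented over the binomial $\lambda$-ring $K_{(g)}$, and its Adams operations act on $K_{(s)}$ as multiplication by $n^{g-s}$. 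The same general mechanism therefore gives
\[
\Filt{q}{\pi}=\bigoplus_{r\geq q}K_\pi^{(r)}=\bigoplus_{s\leq g-q}K_{(s)}.
\]

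Next I would refine by Lemma \ref{lem:eigenspace-decomp-2}, which yields $K^{(r)}=\bigoplus_{s}(K^{(r)}\cap K_{(s)})$ and dually for each $K_{(s)}$. The desired inclusion $\Filt{q}{\pi}\subseteq\Filt{q}{\gamma}$ is then equivalent, for each $q$, to
\[
K^{(r)}\cap K_{(s)}=0\quad\text{for every pair $(r,s)$ with $r<q$ and $s\leq g-q$,}
\]
and letting $q$ range over $1,\dots,g$ this is in turn equivalent to the single bigraded vanishing
\[
K^{(r)}\cap K_{(s)}=0\quad\text{for all $(r,s)$ with $r+s<g$.}
\]

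The third step is to translate back to Chow groups. By Lemma \ref{lem:beauville-1986-prop-1} together with the Chern character isomorphism $K^{(r)}\otimes\Q\simeq CH^r(A)\otimes\Q$, an element of $K^{(r)}\cap K_{(s)}$ corresponds precisely to a class $x\in CH^r(A)\otimes\Q$ satisfying $k_A^*x=k^{g+r-s}x$. In Beauville's conventions $k_A^*$ acts on $CH^r_\sigma(A)$ as $k^{2r-\sigma}$, so $\sigma=r+s-g$ and the condition $r+s<g$ becomes $\sigma<0$. The reformulation thus collapses to the vanishing $CH^r_\sigma(A)=0$ for $\sigma<0$, which is Beauville's conjecture itself.

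The hard part is exactly this last step: the argument reduces Conjecture \ref{conjecture-two-filtration} to Beauville's conjecture, which is open. To get beyond a formal equivalence one would have to inject genuinely new geometric input—for instance Fourier-Mukai vanishings on the bigraded pieces $K^{(r)}\cap K_{(s)}$, a bigraded motivic (Chow-K\"unneth) refinement, or the Jacobian special case alluded to in the introduction—since no purely formal manipulation with the two $\lambda$-structures can avoid this translation.
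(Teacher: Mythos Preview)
The statement is a \emph{conjecture}, and the paper does not prove it; rather, the paper shows (Remark~\ref{rem:conj}) the easy cases $q\in\{0,1,g-1,g\}$ and (Lemma~\ref{lem:conjecture}, Remark~\ref{rem:conjecture-Comparison-1}) that the general statement is equivalent to Beauville's conjecture~$(F_p)$. Your proposal correctly identifies this situation and carries out exactly such a reduction, so in substance it matches what the paper does.

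Your route to the equivalence is, however, organized differently and somewhat more cleanly than the paper's. The paper works element by element: Lemma~\ref{lem:conjecture} expands $\gamma^i_\pi(x)$ for $x\in K^p_q$ via the explicit formula~\eqref{eq:gamma} and shows that $\gamma^i_\pi(x)\in\Filt{i}{\gamma}$ for all $i$ is equivalent to $p\ge g-q$; Remark~\ref{rem:conjecture-Comparison-1} then passes through the Fourier--Mukai transform to $\hat A$ to link this to Beauville's $(F_p)$. You instead invoke the global eigenspace description of a discrete $\gamma$-filtration over~$\Q$ to write $\Filt{q}{\gamma}=\bigoplus_{r\ge q}K^{(r)}$ and $\Filt{q}{\pi}=\bigoplus_{s\le g-q}K_{(s)}$, after which the inclusion becomes the bigraded vanishing $K^{(r)}\cap K_{(s)}=0$ for $r+s<g$, i.e.\ $CH^r_\sigma=0$ for $\sigma<0$ via Lemma~\ref{lem:beauville-1986-prop-1}. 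This is the same endpoint reached without the explicit Stirling-type coefficients $a(i;g-q,m)$ or the detour through $\hat A$.

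One point you should make explicit: the identification $\Filt{q}{\pi}=\bigoplus_{s\le g-q}K_{(s)}$ is not automatic; it uses that the $\pi$-filtration is discrete, which is the content of the unlabeled Lemma just before Conjecture~\ref{conjecture-two-filtration} (proved there with some effort via~\eqref{eq:gamma} and Proposition~\ref{prop:eigenspace-decomp}). Once that lemma is in hand, the general $\lambda$-ring machinery you cite from SGA~6, Exp.~V does apply, but you should cite the lemma rather than treat the identification as obvious.
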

\begin{remark}\label{rem:conj}
 The conjecture above holds true for $q=0,1,g-1,g$. For $q=0$ there is nothing to prove. For $q=1$ note that  if $x\in {\rm ker}(\epsilon_\pi)$, then $x_g=0$ and in particular $\epsilon (x)=x^0=x^0_g=0$. Now let $q=g$. If $x\in \Filt{g}{\pi}$, then $x=x_0=x_0^g$ and hence $x\in \Filt{g}{\gamma}$. Similarly if $x\in \Filt{g-1}{\pi}$, then $x=x_0+x_1=x_0^g+x_1^{g}+x_1^{g-1}$ and hence $x\in \Filt{g-1}{\gamma}$.
\end{remark}
\begin{lemma}\label{lem:conjecture}
 Let $x\in K^p_q$ with $g-q>0$ and $p>0$. The following assertions are equivalent:
 \begin{enumerate}
  \item $p\geq g-q$,
  \item $\gamma^i_\pi(x)\in \Filt{i}{\gamma}$ for all $i\geq 0$,
  \item $\gamma^{g-q}_\pi(x)\in \Filt{g-q}{\gamma}$,
  \item $\gamma^{p+1}_\pi(x)\in \Filt{p+1}{\gamma}$.
  \end{enumerate}
\end{lemma}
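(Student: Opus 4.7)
The plan is to reduce everything to the explicit formula~\eqref{eq:gamma} for $\gamma^{i}_{\pi}(x)$ combined with the eigenspace decomposition $K_{0}(A)\otimes\Q=\bigoplus_{r\geq 0}K^{(r)}$ of Proposition~\ref{prop:eigenspace-decomp}, which the graded property of the Chern character upgrades to $\Filt{i}{\gamma}\otimes\Q=\bigoplus_{r\geq i}K^{(r)}$. Since $x\in K_{(q)}$ with $q<g$ we have $\psi^{n}_{\pi}(x)=n^{g-q}x$, so
\[
\gamma_{\pi}(x,t)=\exp\bigl(x\cdot Q_{g-q}(t)\bigr),\qquad Q_{k}(t):=\phi_{k}\bigl(t/(1-t)\bigr),
\]
where $\phi_{k}(t)=\sum_{n\geq 1}(-1)^{n-1}n^{k-1}t^{n}$. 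A short induction based on $\phi_{k+1}=t\,\phi'_{k}$ (starting from $\phi_{1}=t/(1+t)$) shows $\phi_{k}(t)=g_{k}(t)/(1+t)^{k}$ with $g_{k}(-1)=-(k-1)!$; a direct substitution then identifies $Q_{g-q}(t)$ as a genuine polynomial of degree exactly $g-q$, with lowest term $t$ and leading coefficient $(-1)^{g-q-1}(g-q-1)!$.

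Expanding the exponential yields~\eqref{eq:gamma} with $a(i;g-q,m)=\tfrac{1}{m!}[t^{i}]Q_{g-q}(t)^{m}$, which vanishes unless $m\leq i\leq m(g-q)$. In particular $a(g-q;g-q,1)$ is the nonzero leading coefficient of $Q_{g-q}$, and the Example preceding the lemma records $a(i;g-q,1)=(-1)^{i-1}(i-1)!\begin{Bmatrix}g-q\\i\end{Bmatrix}$, which is nonzero precisely for $1\leq i\leq g-q$. Since $\psi^{n}$ is a ring morphism and $x\in K^{(p)}$, we have $x^{m}\in K^{(mp)}\subseteq\Filt{mp}{\gamma}$.

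I would then treat the four implications as follows. For $(1)\Rightarrow(2)$, every nonvanishing summand $a(i;g-q,m)x^{m}$ of $\gamma^{i}_{\pi}(x)$ satisfies $m(g-q)\geq i$, hence $mp\geq i$ once $p\geq g-q$; thus $x^{m}\in\Filt{i}{\gamma}$ and so is $\gamma^{i}_{\pi}(x)$. Specialization gives $(2)\Rightarrow(3)$ and $(2)\Rightarrow(4)$ with $i=g-q$ and $i=p+1$ respectively. For $(3)\Rightarrow(1)$, assume $x\neq 0$; since $p>0$ the integers $\{mp:m\geq 1\}$ are pairwise distinct, so the $K^{(p)}$-projection of $\gamma^{g-q}_{\pi}(x)$ is exactly $a(g-q;g-q,1)\,x\neq 0$. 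Its lying in $\Filt{g-q}{\gamma}\otimes\Q=\bigoplus_{r\geq g-q}K^{(r)}$ then forces $p\geq g-q$. For $(4)\Rightarrow(1)$, I argue contrapositively: if $p<g-q$ then $1\leq p+1\leq g-q$, so $a(p+1;g-q,1)\neq 0$, and the $K^{(p)}$-projection of $\gamma^{p+1}_{\pi}(x)$ is the nonzero element $a(p+1;g-q,1)x$, which cannot lie in $\Filt{p+1}{\gamma}\cap K^{(p)}=0$, a contradiction.

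The main technical step is the generating-function calculation showing $Q_{g-q}(t)$ is an honest polynomial of degree $g-q$ with nonzero leading coefficient, since this controls both the support of the coefficients $a(i;g-q,m)$ and their crucial nonvanishing at $m=1$ needed in the two converse implications. Once that is in hand, everything reduces to comparing the gamma filtration with the eigenspace decomposition of $K_{0}(A)\otimes\Q$.
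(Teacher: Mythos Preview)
Your argument is correct and follows the same route as the paper's proof: both use formula~\eqref{eq:gamma} together with the eigenspace splitting $\Filt{i}{\gamma}\otimes\Q=\bigoplus_{r\geq i}K^{(r)}$, isolate the $m=1$ summand via its $K^{(p)}$-component, and invoke the nonvanishing of $a(i;g-q,1)$ for $1\leq i\leq g-q$. The paper's $(3)\Rightarrow(1)$ is phrased as ``the coefficient of $x$ is nonzero, hence $x\in\Filt{g-q}{\gamma}$'', which is exactly your projection argument written tersely; likewise for $(4)\Rightarrow(1)$.

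The one genuine addition in your write-up is the explicit generating-function calculation showing that $Q_{g-q}(t)=\phi_{g-q}(t/(1-t))$ is a polynomial of exact degree $g-q$ with leading coefficient $(-1)^{g-q-1}(g-q-1)!$, which pins down the support $m\leq i\leq m(g-q)$ of $a(i;g-q,m)$ and the nonvanishing of $a(g-q;g-q,1)$. The paper uses these facts (in $(1)\Rightarrow(2)$ and in the converse steps) without spelling out this derivation, relying instead on the Stirling-number example and the universality remark preceding~\eqref{eq:gamma}. Your self-contained verification is a welcome clarification, not a different method. One small point: you correctly flag the tacit hypothesis $x\neq 0$ in $(3)\Rightarrow(1)$ and $(4)\Rightarrow(1)$; without it the equivalence with $(1)$ fails, and the paper's proof makes the same implicit assumption.
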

\begin{proof}
Let us first show $(1)\Longrightarrow (2)$: Fix $i\geq 1$. The coefficients of $x^m$ in~\eqref{eq:gamma} is zero unless $m(g-q)\geq i$ in which case $x^m$ belongs to $\Filt{mp}{\gamma}\subseteq \Filt{i}{\gamma}$. For $(2)\Longrightarrow (3)$ there is nothing to prove. To show $(3)\Longrightarrow (1)$ note that the coefficient of $x$ in $\gamma^{g-q}_\pi (x)$ is non-zero and $(3)$ implies that $x\in \Filt{g-q}{\gamma}$ and hence $p\geq g-q$. For $(2)\Longrightarrow (4)$ there is also nothing to prove. Finally we show $(4)\Longrightarrow (1)$: if $p<g-q$, then the coefficient of $x$ in $\gamma^{p+1}_\pi(x)$ is non-zero and by $(4)$ we conclude that $x\in \Filt{p+1}{\gamma}$. This implies that $p\geq p+1$. The lemma is proved. 
\end{proof}
\begin{remark}\label{rem:conjecture-Comparison-1}
The conjecture $(F_p)$ of Beauville in~\cite[\S 5]{Beauville_1982} states that 
\[
F_pK^{(p)}\subseteq \widehat{K}^{(g-p)}\oplus \widehat{K}^{(g-p+1)}\oplus\dots\oplus \widehat{K}^{(g)}.
\]
This is equivalent to the conjecture~\ref{conjecture-two-filtration} (for $\hat{A}$). For example assume that \ref{conjecture-two-filtration} holds and let $x\in K^{(p)}$. By~\ref{lem:f_pkn} we know $y=F_p(x)\in \widehat{K}_{p}$. Using~\ref{lem:eigenspace-decomp-2} we write
\[
 y=y^0+y^1+\dots+y^g, \quad y^q\in \widehat{K}^q_p\subseteq \Filt{g-p}{\pi}.
\]
Therefore by~\ref{conjecture-two-filtration} we have $y^q\in \Filt{g-p}{\gamma}$. Hence $y^q\neq 0$ implies that $q\geq g-p$. 
\end{remark}
\section{The example of line bundles}
In this section we consider elements of ${\rm Pic}(A)$; i.e. line bundles on $A$ and do some explicit computations of the notions related to $\lambda-$ring structures introduced above when applied to line bundles.

Let $L$ be a line bundle on the fixed abelian variety $A\to \spec{k}$. We denote the $K_0-$class of $L$ by the same letter $L$. The computation related to the usual $\lambda-$ring structure on $K_0(A)$ is classical. We only mention that for any $n>0$ we have
\[
 \psi^n(L)=L^n.
\]
On the other hand similar to~\ref{prop:omega_n}, it can be shown that the maps $\pi^n(x)=n^{-g}\psi^n(x)$ are morphisms of the $\lambda-$ring $K_\star=(K_0(A)\otimes\Q,\star)$ and hence comutes with augmentation; it follows that for any line bundle $L$ and integer $n>0$ we have
\[
 \chi (L^n)=n^g\chi (L).
\]
as both are equal to $n^g\chi (\pi^n(L))$. We may also compute the Euler-Poincar\'e characteristic explicitly. For this let
\[
 L=1+l_1+l_2+\dots+l_g,\quad l_j\in K^{(j)}.
\]
be the decomposition of $L$ given by~\ref{prop:eigenspace-decomp}. Since ${\rm ch}_\gamma$ (in~\ref{GRR}) is an isomorphism of algebras over $\Q$ and ${\rm ch}_\gamma(L)=\exp (c^1(L))$ it follows that ${\rm ch}_\gamma(l_1)=c^1(L)$ and $l_j={j!}^{-1}l_1^j\in K^{(j)}$. An explicit formula for $l_1$ in $K_0(A)\otimes\Q$ may be given as:
\[
 l_1={\rm log} (L)=\sum_{n\geq 1} \frac{(-1)^{n-1}}{n}(L-1)^n.
\]
Now we consider two cases:
\begin{enumerate}
 \item If $L$ is anti-symmetric, then $n_A^\ast (L)=L^n$ for all $n\in \Z$ and hence from the formula $\exp (l_1)=L$ we obtain $l_1\in K^1_g$. It follows from~\ref{thm:fm-iso} that in general $\chi (x)[0_A]=x^g_0$ for any $x\in K_0(A)\otimes\Q$, and in particular $\chi(L)=0$. And
\[
\psi^n_\star (L)=n^gL.
\]
\item If $L$ is symmetric, then $l_j\in K^j_{g-j}$. In particular we obtain a Riemann-Roch theorem
\[
\chi (L)[0_A]=\frac{1}{g!}l_1^g=\frac{1}{g!}(L-1)^g\in K^g_0.
\]
And since $\pi_\star^n$ is by~\ref{prop:omega_n} a ring homomorphism we conclude that
\begin{alignat*}{2}
 \psi^n_\star(L) & =n^g\pi^n(L)\\
 & = n^g\exp (\pi^n(l_1))\\
 & =n^g\exp (n^{-1}l_1)\\
 & =:n^g\sqrt[n]{L}.
\end{alignat*}
\end{enumerate}
In particular and as expected the Pontryagin $\lambda_\star-$structure (cf. before~\ref{thm:fm-iso}) does see the difference btween symmetric and anti-symmetric line bundles but the usual $\lambda-$ring structure does not. 
\section{Composition of two $\lambda-$ring structures}
Now we consider certain composition of the usual (Grothendieck) $\lambda-$ring structure with the Pontryagin $\lambda_\star-$structure on $K=K_0(A)\otimes\Q$. This shall give another equivalent form of Beauville's Conjecture.

Keeping the notations of the previous section, define for each $n\geq 1$ and $x\in K$:
\[
 \Psi^n(x)=n^{-g}\psi_\star^n\circ \psi^n(x)
\]
for $x\in K$. It follows from~\ref{prop:omega_n} that $\Psi^n$'s are $\Q-$algebra homomorphism on $K_0(A)\otimes\Q$ and have the properties: $\Psi^n\circ \Psi^m=\Psi^{nm}$, and $\Psi^1=\id{}$. Therefore by [SGA 6, Exp. V, Prop. 7.5], there is a $\lambda-$ring structure $\Lambda$ on $K_0(A)\otimes\Q$ whose corresponding Adams operations are $\Psi^n$.
\begin{example}\label{ex:Lambda-line-bundle}
Let $L$ be a line bundle on $A$. It follows that for each $n\geq 1$ we have $\Psi^n(L)=L^n$ if $L\in {\rm Pic}^0(A)$ and $\Psi^n(L)=L$ if $L$ is symmetric.
\end{example}

For each $-g\leq j\leq g$ define the sub-space $K[j]\subseteq K_0(A)\otimes\Q$ to be generated by $K^p_q$ with $p+q-g=j$. By~\ref{lem:eigenspace-decomp-2} and~\ref{prop:eigenspace-decomp}, the space $K$ is a direct sum of $K[j]$'s. The subspace $K[0]$ is in fact a $\Q-$subalgebra. Let
\[
 \epsilon_\Gamma\colon K_0(A)\otimes\Q\to K[0]
\]
be the natural projection map. The ring $K[0]$ has the induced structure of a binomial $\lambda-$ring\footnote{Comared to the definition in [SGA 6, Exp. V, 2.7], it seems better to define a binomial ring to be any $\Z-$algebra with a $\lambda-$ring structure for which the Adams operations are all the identity maps. Over the rationals these are of course equivalent.};
\[
 \lambda_\beta(x,t)=\exp(x\ln (1+t)).
\]
\begin{conjecture}\label{conjecture-2}
$K[i]\cdot K[-j]=0$ for all $i,j>0$.
\end{conjecture}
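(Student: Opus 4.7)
The plan is to translate the statement into classical Chow-theoretic language and then reduce it to Beauville's original conjecture from the introduction.

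First, I would identify the bigraded pieces $K^p_q = K^{(p)}\cap K_{(q)}$ inside the Chow ring. By \ref{lem:beauville-1986-prop-1}, an element $x$ lies in $K^p_q$ exactly when $k_A^\ast(x) = k^{g+p-q}x$ for all $k\in\Z$. The Chern character of \ref{GRR} is a $\Q$-algebra isomorphism $K_0(A)\otimes\Q \simeq \bigoplus_p CH^p(A)\otimes\Q$ that commutes with $k_A^\ast$ and carries $K^{(p)}$ to $CH^p(A)\otimes\Q$. Consequently it identifies $K^p_q$ with the Beauville eigenspace $CH^p_s(A)\otimes\Q$, where the relation $2p-s = g+p-q$, i.e.\ $s=p+q-g$, determines $s$. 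Summing over $p$ one obtains
\[
K[j] \;\simeq\; \bigoplus_{p\ge 0} CH^p_j(A)\otimes\Q
\]
under ${\rm ch}$.

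Second, since the Chern character is a \emph{ring} isomorphism, the assertion $K[i]\cdot K[-j]=0$ is equivalent to the vanishing of all intersection products
\[
 CH^p_i(A)\cdot CH^{p'}_{-j}(A) \;\subseteq\; CH^{p+p'}_{i-j}(A)\otimes\Q,\quad p,p'\ge 0,\ i,j\ge 1.
\]

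Third, I would invoke the Beauville conjecture stated in the introduction: $CH^{p'}_{-j}(A)=0$ whenever $j>0$. This forces $K[-j]=0$, and so trivially $K[i]\cdot K[-j]=0$ for all $i,j>0$.

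The main obstacle is, of course, that Beauville's conjecture itself is open for $g\ge 2$ and general $p$, so the reduction above only exhibits Conjecture~\ref{conjecture-2} as a consequence of Beauville. An unconditional proof would have to establish the product vanishing $CH^p_i\cdot CH^{p'}_{-j}=0$ in $CH^{p+p'}_{i-j}(A)\otimes\Q$ without first knowing the second factor is trivial, and the target group is in general nonzero. I do not see a route to this from the Fourier-Mukai formalism of \ref{F_qmF_pn}--\ref{thm:fm-iso} alone; any such route would presumably supply enough leverage (via a hard-Lefschetz type statement for the Beauville decomposition, e.g.\ injectivity of multiplication by $c_1(L)$ with $L$ anti-symmetric ample on appropriate Beauville pieces) to recover Beauville's conjecture itself. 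I therefore expect the hard part to be either proving Beauville outright or discovering such a finer multiplicative vanishing that circumvents it.
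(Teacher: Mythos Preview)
The statement you are attempting to prove is a \emph{conjecture} in the paper, not a theorem; the paper offers no proof. What the paper does provide is exactly the reduction you carry out: Remark~\ref{rem:conjecture} observes that Beauville's conjecture $(F_p)$ can be restated as $K[-j]=0$ for all $j>0$, which makes Conjecture~\ref{conjecture-2} hold trivially. Your translation via the Chern character to the Chow-theoretic eigenspaces $CH^p_s$ (with $s=p+q-g$) is correct and yields the same conclusion, and your final paragraph correctly identifies that this is only a conditional reduction, not an unconditional proof---precisely the status the paper assigns to the statement.
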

\begin{remark}\label{rem:conjecture}
The conjecture above is implied by the conjecture $(F_p)$ of Beauville in~\cite{Beauville_1982} which can be stated as $K[-j]=0$ for all $j>0$. On the other hand (the first part of) the conjecture of Bloch~\cite[0.2]{Bloch} can be stated as $K^r_g\cdot K^s_n=0$ for all $n\geq 0$, $r\geq n+1$ and $s\geq 0$. This is implied by~\ref{conjecture-2}; first note that $K^r_g\subseteq K[r]$ and $K^s_n\subseteq K[s+n-g]$. Now if $s+r-g>0$, then $K^r_g\cdot K^s_n\subseteq K^{r+s}_{n}=0$ by definition. Otherwise $s+n-g<s+r-g\leq 0$ and hence $K^r_g\cdot K^s_n=0$ by~\ref{conjecture-2}.  
\end{remark}
From now on and unless otherwise stated we assume that the conjecture~\ref{conjecture-2} holds.\\

Note that the algebra $K_0(A)\otimes\Q$ becomes a $K[0]-$augmented $\lambda-$ring:
\begin{lemma}
 The map $\epsilon_\Gamma\colon K_0(A)\otimes\Q\to K[0]$ is a $\lambda-$morphism of $K[0]-$algebras.
\end{lemma}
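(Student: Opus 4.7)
The plan is to read off the decomposition $K = \bigoplus_{j=-g}^{g} K[j]$ as an eigenspace decomposition for the new Adams operations $\Psi^n$, and then split the verification into (a) being a $K[0]$-algebra map, which is where the conjecture enters, and (b) commuting with Adams operations, which is automatic.

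First I would establish the key computation: for $x \in K^p_q$ we have $\psi^n(x) = n^p x$ and, using Proposition~\ref{prop:omega_n} (in particular that $\pi^n_\star$ is a ring endomorphism), $\psi^n_\star(x) = n^q x$, so that
\[
\Psi^n(x) = n^{-g}\psi^n_\star\psi^n(x) = n^{p+q-g}x.
\]
Thus $K[j]$ is precisely the $\alpha^j$-eigenspace of $\Psi^\alpha$, and $\epsilon_\Gamma$ is the projection onto the $\Psi$-invariants. An immediate consequence is the grading estimate $K[i]\cdot K[j]\subseteq K[i+j]$: indeed, for $x\in K^p_q$, $y\in K^{p'}_{q'}$, multiplicativity of $\psi^n$ (with respect to the usual product) and of $\pi^n_\star = n^{-g}\psi^n_\star$ together force $xy \in K^{p+p'}_{q+q'-g} \subseteq K[(p+q-g)+(p'+q'-g)]$.

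Next I would check that $\epsilon_\Gamma$ is a ring homomorphism, and this is where Conjecture~\ref{conjecture-2} is used. Decomposing $x=\sum_i x_i$, $y=\sum_j y_j$ with $x_i\in K[i]$, $y_j\in K[j]$, the grading estimate gives
\[
xy = \sum_{i,j} x_i y_j, \qquad \epsilon_\Gamma(xy) = \sum_{i+j=0} x_i y_j = x_0y_0 + \sum_{i>0}\bigl(x_i y_{-i}+x_{-i}y_i\bigr).
\]
By the conjecture each $x_iy_{-i}$ and $x_{-i}y_i$ lies in $K[i]\cdot K[-i]=0$, so $\epsilon_\Gamma(xy)=x_0 y_0 = \epsilon_\Gamma(x)\epsilon_\Gamma(y)$. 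Since $\epsilon_\Gamma$ restricts to the identity on $K[0]$ (which is a $\Q$-subalgebra), it is automatically $K[0]$-linear, hence a map of $K[0]$-algebras.

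Finally I would verify the $\lambda$-compatibility. Working over $\Q$, a $\lambda$-morphism is the same as a map commuting with all Adams operations. Since the $\lambda$-structure on $K[0]$ is binomial, its Adams operations are all the identity, so what has to be shown is $\epsilon_\Gamma\circ\Psi^n = \epsilon_\Gamma$ for every $n\geq 1$. But for $x=\sum_j x_j$ the eigenspace description gives $\Psi^n(x)=\sum_j n^j x_j$, whose $K[0]$-component is precisely $x_0=\epsilon_\Gamma(x)$. This step is free and uses only the decomposition, not the conjecture.

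The only real obstacle is the multiplicativity of $\epsilon_\Gamma$, and that is engineered to be the content of Conjecture~\ref{conjecture-2}; no further input is needed.
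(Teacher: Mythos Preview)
Your proof is correct and follows essentially the same route as the paper: use the grading $K[i]\cdot K[j]\subseteq K[i+j]$ together with Conjecture~\ref{conjecture-2} (only the diagonal case $K[i]\cdot K[-i]=0$ is needed) to get multiplicativity of $\epsilon_\Gamma$, and then observe $\epsilon_\Gamma\circ\Psi^n=\epsilon_\Gamma$ to handle the $\lambda$-compatibility. The paper's own proof is simply a terser version of exactly this argument.
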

\begin{proof}
 Since $K[j]\cdot K[-j]=0$ for all $j>0$, it follows that $\epsilon_\Gamma$ is a homomorphism of $K[0]-$algebras. Next we need to show that for any $x\in K_0(A)\otimes\Q$ and $i\geq 0$, we have
 \[
  \epsilon_\Gamma \Lambda^i(x)=\lambda_\beta^i (\epsilon_\Gamma(x)).
 \]
This follows from the fact that $\epsilon_\Gamma$ is a $\Q-$algebra homomorphism and $\epsilon_\Gamma\circ \Psi^n=\epsilon_\Gamma$. 
\end{proof}
\begin{example}
Let $L$ be a symmetric line bundle on $A$. It follows from~\ref{ex:Lambda-line-bundle} that (the $K_0-$class of) $L$ belongs to $K[0]$.
\end{example}

Let $\Filt{r}{\Gamma}$ be the corresponding gamma filtration, e.g. $\Filt{0}{\Gamma}=K_0(A)\otimes\Q$ and $\Filt{1}{\Gamma}={\rm ker (\epsilon_\Gamma})$. Let 
\[
 \grr{r}{\Gamma}{K_0(A)\otimes\Q}:=\Filt{r}{\Gamma}/{\Filt{r+1}{\Gamma}}
\]
be the corresponding quotient $K[0]-$modules. For any $x\in K_0(A)\otimes\Q$ and $i\geq 1$, denote by $c^i_\Gamma(x)\in \grr{i}{\Gamma}{K_0(A)\otimes\Q}$ the $i-$th Chern class of $x$. The Chern ring $${\rm Ch}_{\Gamma}(K_0(A)\otimes\Q):=K[0]\times (1+\prod_{r>0}\grr{r}{\Gamma}{K_0(A)\otimes\Q})$$ is defined and is a $K[0]-$augmented $\lambda-$ring. There is a well-defined complete Chern class morphism
 \[
  \tilde{c}_\Gamma\colon K_0(A)\otimes\Q\to {\rm Ch}_\Gamma(K_0(A)\otimes\Q),\quad x\mapsto (\epsilon_\Gamma (x), 1+c^1_\Gamma(x)+c^2_\Gamma(x)+\dots)
 \]
By [SGA 6, Exp. V, 6.8] this is a $\lambda-$morphism of $K[0]-$augmented $\lambda-$rings.
\begin{lemma}\label{lem:Fil1}
$\Filt{1}{\Gamma}\subseteq \Filt{1}{\gamma}$. 
\end{lemma}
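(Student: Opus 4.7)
The plan is to make both filtrations explicit in terms of the bi-graded eigenspace decomposition $K_0(A)\otimes\Q=\bigoplus_{p,q}K^p_q$, and then observe that the only piece on which the rank augmentation $\epsilon$ can fail to vanish is already entirely contained in $K[0]$, so its complement sits inside the kernel of $\epsilon$.

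First I would unwind the definition of $\Filt{1}{\Gamma}$. Since the Adams operations $\Psi^n=n^{-g}\psi^n_\star\circ\psi^n$ act on $K^p_q$ by the scalar $n^p\cdot n^q\cdot n^{-g}=n^{p+q-g}$, the decomposition $K_0(A)\otimes\Q=\bigoplus_{-g\le j\le g}K[j]$ is the eigenspace decomposition of the $\Psi^n$, so the augmentation $\epsilon_\Gamma$ is exactly the projection onto the eigenvalue-$1$ summand $K[0]$. Hence
\[
\Filt{1}{\Gamma}=\kernel(\epsilon_\Gamma)=\bigoplus_{j\ne 0}K[j].
\]

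Second I would locate $1=[\cali O_A]$ inside this bi-grading. Since $A$ is geometrically connected and the Chern character identifies $K^{(0)}$ with $CH^0(A)\otimes\Q=\Q$, we have $K^{(0)}=\Q\cdot 1$. Now apply Lemma~\ref{lem:beauville-1986-prop-1} with $n=0$: the element $1\in K^{(0)}$ lies in $K^0_m$ iff $k_A^\ast(1)=k^{g-m}\cdot 1$ for every integer $k$. Because $k_A^\ast(1)=1=k^0\cdot 1$, this forces $m=g$, so
\[
K^{(0)}=\Q\cdot 1=K^0_g\subseteq K[0],
\]
the last inclusion holding because $0+g-g=0$. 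Consequently, for every $j\ne 0$ the summand $K[j]$ meets $K^{(0)}$ trivially, i.e.\ it is contained in $\bigoplus_{p\ge 1}K^{(p)}$.

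Finally, the rank $\epsilon$ kills every $K^{(p)}$ with $p\ge 1$ (as $K^{(p)}\subseteq\Filt{1}{\gamma}$ by the eigenspace description of the Grothendieck $\gamma$-filtration), so
\[
\Filt{1}{\Gamma}=\bigoplus_{j\ne 0}K[j]\subseteq \bigoplus_{p\ge 1}K^{(p)}\subseteq\kernel(\epsilon)=\Filt{1}{\gamma},
\]
which is the claim. There is no real obstacle here: the whole argument reduces to pinpointing the grading of the unit, and once Lemma~\ref{lem:beauville-1986-prop-1} is in hand this is immediate. The only conceptual point to stress is the asymmetry between the $K^{(\bullet)}$ and $K_{(\bullet)}$ gradings: the rank only detects $K^{(0)}$, whereas $K[0]$ contains $K^{(0)}$ together with many other pieces $K^p_{g-p}$ with $p\ge 1$, which is exactly why the inclusion goes in the stated direction and, more importantly, is strict in general.
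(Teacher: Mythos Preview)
Your proof is correct and is precisely the argument that the paper implicitly has in mind when it writes ``This is clear'': the rank augmentation $\epsilon$ is supported on $K^{(0)}=\Q\cdot 1=K^0_g\subseteq K[0]$, so it annihilates every $K[j]$ with $j\neq 0$, whence $\Filt{1}{\Gamma}=\kernel(\epsilon_\Gamma)\subseteq\kernel(\epsilon)=\Filt{1}{\gamma}$. You have simply made explicit the identification $K^{(0)}=K^0_g$ (via Lemma~\ref{lem:beauville-1986-prop-1} or, equivalently, the computation at the end of the proof of Proposition~\ref{prop:omega_n}), which is the only nontrivial ingredient.
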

\begin{proof}
 This is clear.
\end{proof}
Let $x\in K_0(A)\otimes\Q$. Using the decompositions~\ref{prop:eigenspace-decomp} and~\ref{lem:eigenspace-decomp-2}, we may uniquely write
\[
 x=x[-g]+\dots+x[0]+x[1]+\dots
\]
where $x[j]\in K[j]$ for all $-g\leq j\leq g$. Let $r\geq 0$ and note that:
\begin{lemma}\label{lem:Fil2}
 $K[j]\subseteq \Filt{r}{\Gamma}$ if $j<0$ or $j\geq r$.
\end{lemma}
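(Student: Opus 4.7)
The plan is to exploit that $K[j]$ is, by construction, the common eigenspace of the Adams operations $\Psi^n$ with eigenvalue $n^j$, and to compare this with the action of $\Psi^n$ on the graded pieces of the $\Gamma$-filtration, which by the general theory of $\lambda$-rings equals multiplication by $n^r$ on $\grr{r}{\Gamma}{K_0(A)\otimes\Q}$. The inevitable mismatch of eigenvalues will force an element of $K[j]$ to descend step by step in the filtration until the two eigenvalues coincide.

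Two preliminary facts are needed. First, each $\Psi^n$ preserves the filtration $\Filt{\bullet}{\Gamma}$: since $\Psi^n$ acts on $K[j]$ by $n^j$, in particular as the identity on $K[0]$, we have $\epsilon_\Gamma\circ\Psi^n=\epsilon_\Gamma$, so $\Psi^n$ is a morphism of $K[0]$-augmented $\lambda$-rings and therefore preserves every $\Filt{r}{\Gamma}$. Second, by the standard computation recorded in [SGA 6, Exp.\ V], $\Psi^n$ induces multiplication by $n^r$ on $\grr{r}{\Gamma}{K_0(A)\otimes\Q}$. That result is stated there in the binomial-augmented setting, and since $K[0]$ is itself a binomial $\Q$-algebra the same proof applies verbatim in the $K[0]$-augmented context considered here.

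The lemma then follows by a short induction on $r$. The base case $r=0$ is trivial since $\Filt{0}{\Gamma}=K_0(A)\otimes\Q$. For the inductive step, take $x\in K[j]$ with $j<0$ or $j\geq r+1$. Either condition implies the corresponding condition for $r$, so $x\in\Filt{r}{\Gamma}$ by the inductive hypothesis, and either condition also forces $j\neq r$. The image $\bar x\in \grr{r}{\Gamma}{K_0(A)\otimes\Q}$ then satisfies simultaneously $\Psi^n(\bar x)=n^j\bar x$ (because $x\in K[j]$) and $\Psi^n(\bar x)=n^r\bar x$ (by the second preliminary fact), giving $(n^j-n^r)\bar x=0$ for every $n\geq 1$; taking $n=2$ forces $\bar x=0$, i.e.\ $x\in\Filt{r+1}{\Gamma}$.

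The main obstacle is really only the eigenvalue computation on the graded pieces; everything else is bookkeeping. As a by-product one sees that, for $j<0$, the argument actually yields $K[j]\subseteq \bigcap_r \Filt{r}{\Gamma}$, so that the Beauville-type vanishing $K[j]=0$ for $j<0$ becomes equivalent to the separatedness of the composite filtration $\Filt{\bullet}{\Gamma}$.
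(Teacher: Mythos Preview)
Your proof is correct. The paper actually leaves this lemma to the reader, so there is no explicit argument to compare against; your eigenvalue-mismatch induction is precisely the kind of proof one would expect, and indeed the paper invokes the same key fact (that $\Psi^n(x)-n^r x\in\Filt{r+1}{\Gamma}$ for $x\in\Filt{r}{\Gamma}$) in the proof of the very next proposition (Proposition~\ref{prop:kernel-c}), which confirms this is the intended route.
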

\begin{proof}
 This is left to the reader.
\end{proof}
\begin{proposition}\label{prop:kernel-c}
${\rm ker}(\tilde{c}_\Gamma)=\Filt{g+1}{\Gamma}$.
\end{proposition}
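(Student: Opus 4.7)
The plan is to identify $\Filt{g+1}{\Gamma}$ with the subspace $K_{\mathrm{neg}}:=\bigoplus_{j<0}K[j]$ of negative-eigenvalue part for the Adams operations $\Psi^n$, and then to deduce both inclusions of $\ker(\tilde{c}_\Gamma)=\Filt{g+1}{\Gamma}$ by transferring the analysis to the positively graded quotient $(K_0(A)\otimes\Q)/K_{\mathrm{neg}}$. Throughout I use the standing assumption that Conjecture~\ref{conjecture-2} holds.

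The first step is to prove $\Filt{g+1}{\Gamma}=K_{\mathrm{neg}}$. The inclusion $\supseteq$ is Lemma~\ref{lem:Fil2} with $r=g+1$: since $K[j]=0$ for $j\geq g+1$, only the condition $j<0$ contributes. For $\subseteq$, observe that under~\ref{conjecture-2} the subspace $K_{\mathrm{neg}}$ is a $\lambda$-ideal: it is closed under multiplication (from $K[i]\cdot K[-j]=0$ for $i,j>0$ and $K[i]\cdot K[j]\subseteq K[i+j]$) and under each $\Lambda^i$ with $i\geq 1$ (since any monomial $x^m$ in $x\in K[j]$ with $j<0$, $m\geq 1$ lies in $K[mj]\subseteq K_{\mathrm{neg}}$, and the $\lambda$-operations are polynomials in $x$ without constant term). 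The quotient $K':=(K_0(A)\otimes\Q)/K_{\mathrm{neg}}=K[0]\oplus K[1]\oplus\dots\oplus K[g]$ thus inherits a positively graded $K[0]$-augmented $\lambda$-ring structure with $\Psi^n$ acting as $n^j$ on $K[j]$. A direct computation from $\log\gamma_t(x)=xF_r(t/(1-t))$, where $F_r(t):=\sum_{n\geq 1}(-1)^{n-1}n^{r-1}t^n$ is a rational function of the form $tQ_r(t)/(1+t)^r$ with $\deg Q_r=r-1$, shows that $F_r(t/(1-t))$ is a polynomial in $t$ of degree exactly $r$. Consequently, for $x\in K[r]$, each $\gamma^i_\Gamma(x)$ is a polynomial in $x$ whose nonzero monomials $x^m$ all satisfy $mr\geq i$, so $\gamma^i_\Gamma(K[r])\subseteq\bigoplus_{j\geq i}K[j]$ in $K'$. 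Combined with Lemma~\ref{lem:Fil2} this forces $\Filt{r}{\Gamma}(K')=\bigoplus_{j\geq r}K[j]$, and in particular $\Filt{g+1}{\Gamma}(K')=0$, whence $\Filt{g+1}{\Gamma}\subseteq K_{\mathrm{neg}}$.

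With $\Filt{g+1}{\Gamma}=K_{\mathrm{neg}}$ in hand, the inclusion $\Filt{g+1}{\Gamma}\subseteq\ker(\tilde{c}_\Gamma)$ is immediate: for $x\in K_{\mathrm{neg}}$ the augmentation $\epsilon_\Gamma(x)$ vanishes, and the $\lambda$-ideal property gives $\gamma^i_\Gamma(x)\in K_{\mathrm{neg}}\subseteq\bigcap_{r}\Filt{r}{\Gamma}$ for all $i$, so every $c^i_\Gamma(x)=0$ in $\grr{i}{\Gamma}{K_0(A)\otimes\Q}$. For the reverse inclusion, take $x\in\ker(\tilde{c}_\Gamma)$ and decompose $x=x_0+x_++x_-$ with $x_0=\epsilon_\Gamma(x)\in K[0]$, $x_+\in\bigoplus_{j>0}K[j]$, and $x_-\in K_{\mathrm{neg}}$. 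The hypothesis gives $x_0=0$, and multiplicativity of the total Chern class $c_t(x_++x_-)=c_t(x_+)\,c_t(x_-)$ together with $c_t(x_-)=1$ yields $c_t(x_+)=1$. Write $x_+=\sum_{r=1}^{g}x_+[r]$ with $x_+[r]\in K[r]$; the same computation identifies $c^r_\Gamma(x_+)$ in $\grr{r}{\Gamma}{K_0(A)\otimes\Q}\simeq K[r]$ with $(-1)^{r-1}(r-1)!\cdot x_+[r]$ modulo the contributions from the components $x_+[s]$ with $s<r$ already forced to zero by induction on $r$. Iterating for $r=1,\dots,g$ gives $x_+=0$, so $x=x_-\in K_{\mathrm{neg}}=\Filt{g+1}{\Gamma}$.

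The main obstacle is the structural degree computation: that $F_r(t/(1-t))$ is a polynomial in $t$ of degree exactly $r$ with leading coefficient $(-1)^{r-1}(r-1)!$. Both features are needed---the polynomiality governs the containment $\gamma^i_\Gamma(K[r])\subseteq\bigoplus_{j\geq i}K[j]$, which in turn controls both $\Filt{g+1}{\Gamma}=K_{\mathrm{neg}}$ and the vanishing of Chern classes on $K_{\mathrm{neg}}$, while the nonzero leading coefficient drives the inductive recovery of the positive eigencomponents $x_+[r]$. Everything else---the multiplicativity of $c_t$, the eigenspace separations, and the $\lambda$-ideal property of $K_{\mathrm{neg}}$---is routine within this framework.
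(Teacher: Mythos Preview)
Your proof is correct and proceeds along a genuinely different route from the paper's. The paper argues abstractly: it first invokes the general identity $\ker(\tilde{c}_\Gamma)=\bigcap_{r}\Filt{r}{\Gamma}$ (valid in any augmented $\lambda$-ring), and then shows the filtration stabilizes at stage $g+1$ by an eigenspace trick---if $x\in\Filt{r}{\Gamma}\setminus\Filt{r+1}{\Gamma}$, one finds a homogeneous component $x[j]$ with $j>0$ sharing this property, and the standard congruence $\Psi^n(x[j])\equiv n^r x[j]\pmod{\Filt{r+1}{\Gamma}}$ forces $j=r\leq g$. No explicit identification of $\Filt{r}{\Gamma}$ is made. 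Your argument is constructive: you pin down $\Filt{r}{\Gamma}=K_{\mathrm{neg}}\oplus\bigoplus_{j\geq r}K[j]$ for every $r$ via the degree computation for $F_r(t/(1-t))$, which is essentially the same combinatorics underlying the coefficients $a(i;g-q,m)$ in~\eqref{eq:gamma} and the Stirling-number example following it. This buys you the extra information $\grr{r}{\Gamma}{K}\cong K[r]$, and lets you recover $x_+[r]$ inductively from the nonvanishing leading coefficient $(-1)^{r-1}(r-1)!$. The paper's route is shorter and avoids the polynomial-degree lemma, but yours yields a finer structural description of the entire $\Gamma$-filtration, not just its $(g{+}1)$-st stage.
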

\begin{proof}
Let us first show that \[
 {\rm ker}(\tilde{c}_\Gamma)=\bigcap_{r\geq 0}\Filt{r}{\Gamma}.
\]
This is easy (and generally true): if $\tilde{c}_\Gamma(x)=0$, then $\epsilon_\Gamma(x)=0$ and $c^i(x)=0$ for all $i\geq 1$. For $i=1$, this gives $x\in \Filt{2}{\Gamma}$. Using an induction on $i$ and the result [SGA 6, Exp. V, Prop. 6.9], it follows that $x\in \Filt{i}{\Gamma}$ for all $i\geq 0$. Next we show that for each $0\leq r\leq g$ we have $\Filt{r}{\Gamma}\neq \Filt{r+1}{\Gamma}$. To see this assume $r\geq 1$. Using the powers of elements of ${\rm Pic}^{0}(A)$, we know that $K^r_g\neq 0$ for all $1\leq r\leq g$ and note that $K^r_g\subseteq K[r]\subseteq \Filt{r}{\Gamma}$. Let $0\neq x\in K^r_g\cap \Filt{r+1}{\Gamma}$. Therefore we can write
\[
 x=\sum a\Gamma^i(x)\cdot \Gamma^j(y)\cdot\ldots\cdot \Gamma^k(z)
\]
where $a\in K[0]$, $i+j+\dots+k\geq r+1$ and $x,y,\dots,z\in \Filt{1}{\Gamma}$. Using the decompositions~\ref{rem:eigenspace-decomp} and~\ref{lem:eigenspace-decomp-2}, we may assume that $a, x,y,\dots,z$ are homogeneous, i.e. each one is an element of some $K^p_q$. The uniqueness of the decompositions and the property
\[
 K^a_b\cdot K^\alpha_\beta\subseteq K^{a+\alpha}_{b+\beta-g}
\]
imply that we may assume that all $x,y,\dots,z$ belong to $K_g$ and $a\in \Q$. In this case $\Gamma^i(x)=\gamma^i(x)$ and similarly for other values of $i$ and $x$. Therefore $x\in \Filt{r+1}{\gamma}$. This can not happen because ${\rm ch}_\gamma(x)\neq 0$. Finally we show that $\Filt{g+1}{\Gamma}=\Filt{g+2}{\Gamma}=\dots$. For this let $x\in \Filt{r}{\Gamma}\setminus \Filt{r+1}{\Gamma}$ for some $r>0$. By the first part of the proof and the fact from~\ref{lem:Fil2} that $K[-j]\subseteq {\rm ker}(\tilde{c}_\Gamma)$ for all $j>0$, it follows that $0\neq x[j]\in \Filt{r}{\Gamma}\setminus \Filt{r+1}{\Gamma}$ for some $j>0$. Using the fact that $\Psi^n(x[j])=n^jx$ for all $n\geq 1$ and $\Psi^n(x[j])-n^rx[j]\in \Filt{r+1}{\Gamma}$ (by the general properties and definitions of any $\gamma-$filtration), it follows that $j=r$, i.e. $r\leq g$. The result follows. 
\end{proof}

\begin{conjecture}\label{conjecture-3}
$\Filt{g+1}{\Gamma}=0$.
\end{conjecture}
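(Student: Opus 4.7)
The reduction via Proposition~\ref{prop:kernel-c} shows that the conjecture $\Filt{g+1}{\Gamma}=0$ is equivalent to injectivity of the total Chern class $\tilde{c}_\Gamma$. My plan is to attack this injectivity by combining the bigraded decomposition $K_0(A)\otimes\Q = \bigoplus_{p,q} K^p_q$ from Proposition~\ref{prop:eigenspace-decomp} and Lemma~\ref{lem:eigenspace-decomp-2} with the $\Gamma$-specific structure, extracting the vanishing of each homogeneous piece $x[j]$ of $x\in\ker(\tilde{c}_\Gamma)$ separately.

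First I would use that each Adams operation $\Psi^n$ acts as multiplication by $n^j$ on $K[j]$ and commutes with $\tilde{c}_\Gamma$, since $\Psi^n$ is a $\lambda$-morphism for $\Gamma$. Applying $\Psi^n$ for $n=1,\dots,2g+1$ and inverting the corresponding Vandermonde matrix over $\Q$ separates $\tilde{c}_\Gamma(x)=0$ into the independent equations $\tilde{c}_\Gamma(x[j])=0$ for every $j$. For $j=0$ this is immediate: $\epsilon_\Gamma$ restricts to the identity on $K[0]$, so $\epsilon_\Gamma(x[0])=0$ gives $x[0]=0$. For $j>0$, I would derive an explicit formula for $\gamma^i_\Gamma$ on the eigenspace $K[j]$ by substituting $t\mapsto t/(1-t)$ in
\[
 \lambda_\Gamma(x[j],t)=\exp\Bigl(x[j]\sum_{n\geq 1}\tfrac{(-1)^{n-1}}{n}n^j\,t^n\Bigr),
\]
producing $\gamma^i_\Gamma(x[j])$ as a polynomial in the powers of $x[j]$ (in the spirit of~\eqref{eq:gamma}) with nonzero degree-one coefficient for small $i$. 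Combining $c^i_\Gamma(x[j])=0$ for all $i\geq 1$ with the finite nilpotency $x[j]^k\in K[kj]=0$ for $kj>g$, the resulting triangular system of equations can be solved by downward induction on the nilpotency index to conclude $x[j]=0$.

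The genuine obstacle is the case $j<0$. Lemma~\ref{lem:Fil2} shows that every $K[j]$ with $j<0$ lies in each $\Filt{r}{\Gamma}$ and therefore in $\ker(\tilde{c}_\Gamma)$ automatically, so the Chern classes are blind to these elements and no argument purely within the $\Gamma$-Chern theory can reach them. My plan would be to bring in outside input: use Lemma~\ref{lem:f_pkn} to identify $F_p(K[j])=\widehat{K}[j]$, and then combine Grothendieck--Riemann--Roch (Theorem~\ref{GRR}) with the topological vanishing $\Filt{g+1}{\rm top}=0$ on $\hat A$ to argue that a nonzero $x[j]$ with $j<0$ would push forward to a cycle class of codimension beyond~$g$. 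Making this step rigorous, rather than merely plausible, is the real difficulty, as it is essentially equivalent to Beauville's open conjecture $(F_p)$ for $\hat A$ (cf.\ Remark~\ref{rem:conjecture}); any unconditional proof of Conjecture~\ref{conjecture-3} would have to introduce a new geometric input that directly controls these negative-weight eigenspaces.
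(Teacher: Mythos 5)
You are attempting to prove a statement that the paper itself does not prove: Conjecture~\ref{conjecture-3} is left open, and the remark following it only establishes that it is \emph{equivalent} to Beauville's conjecture $(F_p)$ of~\cite{Beauville_1982} (via the kernel computation of Proposition~\ref{prop:kernel-c}, which is itself developed under the standing assumption of Conjecture~\ref{conjecture-2}). So there is no paper proof to match, and your proposal, by your own admission in the final paragraph, is not a proof either. Your treatment of the nonnegative-weight pieces is essentially sound and runs parallel to what the paper already does: the separation of $\ker(\tilde{c}_\Gamma)$ into homogeneous pieces $x[j]$ by the Adams operations $\Psi^n$, and the argument that a nonzero homogeneous element of weight $j\geq 0$ cannot have all $\Gamma$-Chern classes zero, is the content of the proof of Proposition~\ref{prop:kernel-c} and of the remark after Conjecture~\ref{conjecture-3}. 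None of that touches the actual content of the statement.

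The genuine gap is exactly where you place it, but your proposed repair would not work even in outline. For $j<0$, Lemma~\ref{lem:Fil2} shows $K[j]\subseteq\Filt{r}{\Gamma}$ for every $r$, so the $\Gamma$-theory is structurally blind to these classes; and the outside input you suggest --- $F_p(K[j])=\widehat{K}[j]$ together with Theorem~\ref{GRR} and $\Filt{g+1}{\rm top}=0$ --- cannot produce a contradiction, because a nonzero $x\in K^p_q$ with $j=p+q-g<0$ has a perfectly legitimate Chern character concentrated in codimension $p\leq g$; by Lemma~\ref{lem:beauville-1986-prop-1} it corresponds precisely to a nonzero class in Beauville's $CH^p_s(A)$ with $s=j<0$, and ruling such classes out \emph{is} the conjecture $(F_p)$, not a consequence of the coniveau bound. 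So the proposal should be read as a correct diagnosis of why the statement is open (matching the paper's Remark following Conjecture~\ref{conjecture-3}), not as a proof; any write-up claiming the statement would have to be withdrawn or explicitly conditioned on Beauville's conjecture.
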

\begin{remark}
This conjecture is equivalent to the conjecture $(F_p)$ of Beauville (cf.~\ref{rem:conjecture-Comparison-1}). To see this note that~\ref{conjecture-3} implies that the complete Chern class $\tilde{c}_\Gamma$ is injective. Note that if $0\neq x\in K^p_q$ with $p,g-q>0$, then $x\in K[j]$ with $j=p+q-g$ and hence for $j<0$ we would have $x\in {\rm ker}(\tilde{c}_\Gamma)$. Therefore $p+q-g\geq 0$. On the other hand the conjecture of Beauville implies that $K[-j]=0$ for all $j>0$. Therefore every $0\neq x\in K_0(A)\otimes\Q$ may be written as
\[
 x=x[0]+x[1]+\dots+x[g]
\]
where $x[j]\in K[j]$ for all $0\leq j\leq g$. Using (the proof of) the result~\ref{prop:kernel-c} we conclude that $x\not\in {\rm ker}(\tilde{c}_\Gamma)$.
\end{remark}
\begin{example}
 Let $x\in K[-1]$. It follows that $\tilde{c}_\Gamma(x)=0$ and $x\in \Filt{r}{\Gamma}$ for all $r\geq 0$. We may write
\[
 \Lambda_t(x)=\exp \bigl(x\int \frac{\ln(1+t)}{t}{\rm d}t\bigr)
\]
understood as formal identities in the ring of power series in $K_0(A)\otimes\Q$. Similarly we may write
\[
 \Gamma_t(x)=\exp \bigl(xt+\frac{3x}{2!}t^2+\frac{11x}{3!}t^3+\frac{50x}{4!}t^4+\frac{274x}{5!}t^5+\dots\bigr)
\]
where the numbers in the numerators of the coefficients are certain Stirling numbers.\footnote{The conjectures above seem (imprecisely speaking) to be stating that exponentials of the above type may not be (algebraically) defined in a ring such as $K_0(A)\otimes\Q$ with available exponentials of the form
\[
 \lambda_t(x)=\exp \bigl(x\sum (-1)^{n-1}n^{q-1}t^n)
\]
for $q\geq 0$.}
\end{example}
\begin{bibdiv}
\begin{biblist}
\bib{sga6}{book}{note = {\emph{Th\'eorie des intersections et th\'eor\`eme de
Riemann-Roch}. S\'eminaire de G\'eom\'etrie Alg\'ebrique du {Bois-Marie}
1966{-}1967 {(SGA} 6), Dirig\'e par P. Berthelot, A. Grothendieck et L. Illusie.
Avec la collaboration de D. Ferrand, J. P. Jouanolou, O. Jussila, S. Kleiman, M.
Raynaud et J. P. Serre. Lecture Notes in Mathematics, Vol. 225,
{Springer-Verlag}, Berlin, 1971}
}
\bib{Beauville_1982}{article}{
   author={Beauville, A.},
   title={Quelques remarques sur la transformation de Fourier dans l'anneau
   de Chow d'une vari\'et\'e ab\'elienne},
   conference={
      title={Algebraic geometry},
      address={Tokyo/Kyoto},
      date={1982},
   },
   book={
      series={Lecture Notes in Math.},
      volume={1016},
      publisher={Springer, Berlin},
   },
   date={1983},
   pages={238--260},
}
\bib{Beauville_1986}{article}{
year={1986},
journal={Math. Ann.},
volume={273},
number={4},
title={Sur l'anneau de Chow d'une vari\'et\'e ab\'elienne},
author={Beauville, A.},
pages={647--651},
}
\bib{Bloch}{article}{
   author={Bloch, S.},
   title={Some elementary theorems about algebraic cycles on Abelian
   varieties},
   journal={Invent. Math.},
   volume={37},
   date={1976},
   number={3},
   pages={215--228},
}
\bib{mukai_1981}{article}{
   author={Mukai, S.},
   title={Duality between $D(X)$ and $D(\hat X)$ with its application to
   Picard sheaves},
   journal={Nagoya Math. J.},
   volume={81},
   date={1981},
   pages={153--175},
}
\bib{mumford-1985}{book}{
    Author = {Mumford, D.},
    Title = {Abelian varieties. With appendices by C. P. Ramanujam and Yuri Manin. (Reprint of 2nd Edition)},
    Year = {1985},
    series = {Tata Institute of Fundamental Research, Studies in Mathematics}, 
    volume={5},
    publisher={Oxford University Press},
    pages={279 pp.},
}
\end{biblist}
\end{bibdiv}
\end{document}